\documentclass[11pt]{amsart}

\usepackage{fontenc}
\usepackage{mathtools}
\usepackage{amsfonts}
\usepackage{amsopn}
\usepackage{amsmath}
\usepackage{amssymb}
\usepackage{mathrsfs}
\usepackage{etoolbox}
\usepackage{amsthm}
\usepackage{esint}
\usepackage{blindtext}
\usepackage[normalem]{ulem} 
\usepackage{geometry}

\usepackage{hyperref}
\usepackage{cleveref}

\makeatletter
\newcommand*{\rom}[1]{\expandafter\@slowromancap\romannumeral #1@}
\makeatother

\newtheorem{theorem}[subsection]{Theorem}

\newtheorem{lemma}[subsection]{Lemma}

\newtheorem{corollary}[subsection]{Corollary}

\theoremstyle{definition}

\DeclarePairedDelimiter{\abs}{\lvert}{\rvert}
\DeclarePairedDelimiter{\norm}{\lVert}{\rVert}

\makeatletter
\newsavebox\myboxA
\newsavebox\myboxB
\newdimen\mydimen
\newcommand*\xoverline[2][0.75]{%
    \sbox{\myboxA}{$\m@th#2$}%
    \sbox{\myboxB}{\rule{#1\wd\myboxA}{0.4pt}}
    \mydimen=\ht\myboxA%
    \advance\mydimen by 1.5pt%
    \rlap{\hbox to \wd\myboxA{\hss\raisebox{\mydimen}{\usebox\myboxB}\hss}}%
    \usebox\myboxA
}
\makeatother

\newcommand{\iprod}{\mathbin{\lrcorner}}

\numberwithin{equation}{section}

\newtheoremstyle{boldstep}
  {3pt}                
  {3pt}               
  {\normalfont}        
  {}                   
  {\bfseries}          
  {.}                 
  {.5em}              
  {\thmname{#1}\thmnumber{ #2}\thmnote{: #3}}
\theoremstyle{boldstep}
\newtheorem{step}{Step}
\crefname{step}{Step}{Steps}

\title[SKODA DIVISION THEOREM ON COMPACT KÄHLER MANIFOLDS]{SKODA DIVISION THEOREM ON COMPACT KÄHLER MANIFOLDS FOR LINE BUNDLES WITH SINGULAR HERMITIAN METRICS}

\author{Jaehoon Jeong}
\address{Department of Mathematical Sciences, Seoul National University, 1 Gwanak-ro, Gwanak-gu, Seoul 08826, South Korea}
\email{veriche4@snu.ac.kr}
\AddToHook{env/theorem/begin}{\crefalias{subsection}{theorem}}
\AddToHook{env/exercise/begin}{\crefalias{subsection}{exercise}}
\AddToHook{env/definition/begin}{\crefalias{subsection}{definition}}
\AddToHook{env/lemma/begin}{\crefalias{subsection}{lemma}}
\AddToHook{env/conjecture/begin}{\crefalias{subsection}{conjecture}}
\AddToHook{env/corollary/begin}{\crefalias{subsection}{corollary}}
\AddToHook{env/remark/begin}{\crefalias{subsection}{remark}}
\AddToHook{env/example/begin}{\crefalias{subsection}{example}}

\begin{document}

\begin{abstract}
We prove a Skoda division theorem on a compact Kähler manifold for holomorphic sections of line bundles with singular hermitian metrics. As a corollary, we prove a Siu-type division theorem on compact Kähler manifolds for holomorphic line bundles endowed with singular hermitian metrics. 
\end{abstract}
\maketitle
\tableofcontents

\section{Introduction}

Let $\Omega\subset\mathbb{C}^n$ be a pseudoconvex domain and let $\psi$ be a plurisubharmonic function on $\Omega$. Let $g_1,\cdots,g_p$ be holomorphic functions on $\Omega$, and define $\abs{g}^2=\sum_l\abs{g_l}^2$. Let $q=\min(n, p-1)$ and $\alpha>1$. Skoda proved in his paper (\textit{cf.} \cite{Sko72}) that if $f$ is a holomorphic function on $\Omega$ satisfying the integrability condition
\vspace{0.2cm}
\[
\int_\Omega\frac{\abs{f}^2e^{-\psi}}{\abs{g}^{2(\alpha q+1)}}\,dV<\infty{,}
\]
where $dV$ is the standard Lebesgue volume measure on $\mathbb{C}^n$, then there exist holomorphic functions $h_1,\cdots,h_p$ on $\Omega$ such that $f=h_1g_1+\cdots+h_pg_p$ and the $p$-tuple $h=(h_1,\cdots,h_p)$ satisfies the $L^2$ estimate
\vspace{0.2cm}
\[
\int_\Omega\frac{\abs{h}^2e^{-\psi}}{\abs{g}^{2\alpha q}}\,dV\leq\frac{\alpha}{\alpha-1}\int_\Omega\frac{\abs{f}^2e^{-\psi}}{\abs{g}^{2(\alpha q+1)}}\,dV{.}
\] 
In 1978, Skoda generalized this to a geometric setting, which is described by a surjective morphism of vector bundles:
\begin{theorem} \cite{Sko78}
Let $X$ be an $n$-dimensional weakly pseudoconvex Kähler manifold. Let $g: E\to Q$ be a surjective morphism of holomorphic hermitian vector bundles over $X$, where $E$ is Nakano semi-positive. \\
\indent If $M$ is a hermitian line bundle satisfying $\sqrt{-1}\Theta_M\geq q\cdot \sqrt{-1}\Theta_{\det Q}$ for some real number $q> \min\{n, \operatorname{rank}E- \operatorname{rank}Q\}$, then the induced map on global sections
\[
H^0(X, E\otimes K_X\otimes M)\to H^0(X, Q\otimes K_X\otimes M)
\]
is surjective.
\end{theorem}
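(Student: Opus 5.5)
The plan is to follow Skoda's $L^2$ method for surjective bundle morphisms, via the Hörmander–Demailly estimate on weakly pseudoconvex Kähler manifolds. Take $f\in H^0(X,Q\otimes K_X\otimes M)$. Let $g^*:Q\to E$ be the adjoint of $g$ for the hermitian metrics and $gg^*\in \operatorname{End}(Q)$; this is invertible because $g$ is surjective. Set $S=\ker g$, a subbundle of rank $r=\operatorname{rank}E-\operatorname{rank}Q$. The initial smooth lift is
\[
h_0=g^*(gg^*)^{-1}f ,
\]
the pointwise minimal-norm solution of $gh_0=f$. Its $\bar\partial$-derivative $v:=\bar\partial h_0$ is a $(n,1)$-form with values in $S\otimes M$, since $g\bar\partial h_0=\bar\partial f=0$. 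We need $u$, an $(n,0)$-form with values in $S\otimes M$, solving $\bar\partial u=v$. Then $h=h_0-u$ is holomorphic with $gh=f$.

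The key steps are as follows.

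\textbf{Step 1.} Identify the metric on $S$ and the second fundamental form. By the Gauss–Griffiths formulas, $\bar\partial h_0=-\beta^*\!\cdot(\cdot)$, and $v$ is expressed through the second fundamental form $\beta\in C^\infty(\Lambda^{1,0}\otimes\operatorname{Hom}(S,Q))$. Also $i\Theta_S\ge_{\rm Nak} -i\beta^*\wedge\beta$ as a subbundle of the Nakano semi-positive bundle $E$.

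\textbf{Step 2.} Twist the bundle to absorb this negativity. Apply the $L^2$ existence theorem to $S\otimes M$, equivalently to $S\otimes(\det Q)^{q}\otimes M'$ with $M=M'\otimes(\det Q)^q$ and $i\Theta_{M'}\ge0$. Here one uses Skoda's key inequality: for any $\varepsilon>0$, the curvature of $S\otimes(\det Q)^{k}$ for real $k\ge \min(n,r)+\varepsilon$ dominates the term $i\beta^*\wedge\beta$ in the sense of the Nakano operator $[i\Theta,\Lambda]$ acting on $(n,1)$-forms, with room to spare. This yields the a priori estimate
\[
\langle [i\Theta,\Lambda]^{-1}v,v\rangle \le \tfrac{q}{q-\min(n,r)}\,|f|^2_{\cdots}
\]
pointwise, which is where the hypothesis $q>\min\{n,r\}$ enters.

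\textbf{Step 3.} Make everything finite. Choose the global $L^2$ setup on $X$ by using a complete Kähler metric on $X_c=\{\varphi<c\}$, where $\varphi$ is the psh exhaustion; this is standard for weakly pseudoconvex manifolds. Introduce weights in $\varphi$ if necessary so that $f$ is $L^2$ on each $X_c$. Solve $\bar\partial u_c=v$ with estimates uniform in $c$, then extract a weak limit as $c\to\infty$. Alternatively, take $f$ with a weight $e^{-\chi(\varphi)}$ for a rapidly increasing convex $\chi$; this weight only adds positivity.

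The main obstacle is the pointwise curvature inequality of Step 2. One must show that the operator $[i\beta^*\wedge\beta,\Lambda]$, bounded on $(n,1)$-forms with values in $S$, is dominated by $k\,i\Theta_{\det Q}\otimes \mathrm{Id}$ for $k>\min(n,r)$, with the correct constant. I would carry it out in a frame at a point where $\beta$ is written as a matrix of $(1,0)$-forms. Then I would apply Cauchy–Schwarz twice, once over the $n$ form indices and once over the $r$ fibre indices, using $\operatorname{Tr}(i\beta\wedge\beta^*)\le i\Theta_{\det Q}$ modulo the semi-positive contribution of $E$. The factor $\min(n,r)$ arises as the rank bound in this Cauchy–Schwarz step. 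Handling the real exponent $q$ only requires working with the curvature form rather than a genuine power of the bundle.
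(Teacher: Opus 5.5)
The paper does not prove this statement; it is quoted from \cite{Sko78} as background, so there is no internal argument to compare against. Your outline is the classical Skoda--Demailly proof and it is correct.

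The step your Step 2 only asserts, and which you should write out, is the following. Give $Q$ the quotient metric (see below). Then $i\Theta_S=i\Theta_E|_S+i\beta^*\wedge\beta$ with $i\Theta_E|_S\ge_{\mathrm{Nak}}0$. Since $E$ is Griffiths semi-positive, also $i\Theta_{\det Q}\ge\operatorname{Tr}_Q(i\beta\wedge\beta^*)$. So for an $S\otimes M$-valued $(n,1)$-form $u=\sum_ju_j\,dz\wedge d\bar z_j$, in orthonormal coordinates at a point, the operator $A=[i\Theta_{S\otimes M},\Lambda]$ satisfies
\[
\langle Au,u\rangle\ge-\Big|\sum_j\beta_ju_j\Big|^2+q\sum_{j,l}\operatorname{Tr}(\beta_j\beta_l^*)\langle u_j,u_l\rangle\ge\frac{q-m}{m}\Big|\sum_j\beta_ju_j\Big|^2,\qquad m=\min\{n,r\}.
\]
On the other side, $v=-\beta^*f$ gives $|\langle v,u\rangle|^2=|\langle f,\sum_j\beta_ju_j\rangle|^2\le|f|^2|\sum_j\beta_ju_j|^2$. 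This is why the surplus curvature controls $v$, rather than merely making $S\otimes M$ Nakano semi-positive. It yields $\langle A^{-1}v,v\rangle\le\frac{m}{q-m}|f|^2$. Your constant $\frac{q}{q-m}$ is the one for $\|h\|^2=\|h_0\|^2+\|u\|^2$.

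For the second inequality in the display, work in orthonormal frames $(s_b)$ of $S$ and $(e_a)$ of $Q$, and put $(M_a)_{bc}=\sum_{j=1}^n\langle\beta_js_b,e_a\rangle u_{j,c}$. Then:
\begin{itemize}
\item $\operatorname{Tr}M_a=\langle\sum_j\beta_ju_j,e_a\rangle$;
\item $\sum_a\|M_a\|^2=\sum_{j,l}\operatorname{Tr}(\beta_j\beta_l^*)\langle u_j,u_l\rangle$;
\item $|\operatorname{Tr}M_a|^2\le\operatorname{rank}(M_a)\,\|M_a\|^2$, with $\operatorname{rank}M_a\le\min(n,r)$.
\end{itemize}
Phrase it this way rather than as two separate Cauchy--Schwarz inequalities, whose constants would multiply to $nr$.

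Three corrections.
\begin{itemize}
\item Sign: with $D_E=\begin{pmatrix}D_S&-\beta^*\\ \beta&D_Q\end{pmatrix}$, the negative term in the subbundle curvature is $+i\beta^*\wedge\beta$ (Nakano form $-|\sum_j\beta_ju_j|^2$), not $-i\beta^*\wedge\beta$.
\item Metric on $Q$: the Gauss--Griffiths identities and $i\Theta_{\det Q}\ge\operatorname{Tr}_Q(i\beta\wedge\beta^*)$ need the quotient metric on $Q$, but the hypothesis refers to the given one. Since $i\Theta_{\det Q,\mathrm{quot}}=i\Theta_{\det Q}+i\partial\bar\partial\log\det(gg^*)$, replacing $h_M$ by $h_M\det(gg^*)^{-q}$ transfers the hypothesis. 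This is harmless, because only holomorphic sections matter.
\item Step 3: $f$ is automatically $L^2$ on each relatively compact $X_c$, so that is not the issue. What is really needed is the global weight $e^{-\chi(\varphi)}$ with $\int_X|f|^2e^{-\chi(\varphi)}<\infty$. After that, Demailly's estimate on weakly pseudoconvex Kähler manifolds applies on $X$ directly.
\end{itemize}

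It is worth noticing that the paper's proof of Theorem~\ref{MAIN THEOREM 1} runs exactly your mechanism, in coordinates, for $E=\mathcal{O}^{\oplus p}$ and $Q$ a line bundle:
\begin{itemize}
\item $S_k$ consists of $\ker g$-valued forms;
\item $\Phi_g$ in Lemma~\ref{Ohss} is the second fundamental form term;
\item $v=f\wedge\bar\partial(\bar g/|g|^2)$ is your $\bar\partial h_0$;
\item Lemmas~\ref{Skoda} and~\ref{726} are your rank inequality;
\item the weights $(|g|^2e^{-\eta})^{-\alpha q}$ are exactly the $\det(gg^*)$-twist above (there $\alpha q$ plays the role of your $q$, and their $q$ is your $m$).
\end{itemize}
The extra apparatus there (Demailly regularization, the perturbation $C_\rho$, the three successive limits) is forced by the singular metrics and the zeros of $g$. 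Your smooth, surjective setting avoids all of that.
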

\indent In algebra, Hilbert's Nullstellensatz relates the common zero locus of a polynomial ideal to the radical of that ideal. In complex analysis, Cartan's Theorem provides an analogue, ensuring that if a function $f$ belongs to the ideal generated by $g_1, \cdots, g_p$ locally at every point, it belongs to the global ideal. However, Cartan's theorem does not tell us what the coefficients $h_l$ in the equation $f = \sum h_l g_l$ actually look like. On the other hand, the Skoda division theorem provides quantitative bounds on the coefficients $h_l$. Skoda proved that if $f$ vanishes to a sufficiently high order at the common zeros of $g$, the division is not only possible, but the solutions are well-behaved. \\
\indent After Skoda's works, there have been many contributions to establish it in various settings. For example, Demailly proved a version for complete Kähler manifolds by using a surjective morphism of holomorphic vector bundles with smooth hermitian metrics (\textit{cf.} \cite[Theorem 11.8]{Dem12}). Chen also proved a Skoda division theorem on complete Kähler domains in $\mathbb{C}^n$ using possibly singular plurisubharmonic weights (\textit{cf.} \cite{Che16}).  Chen used a different type of functional analysis theorem from Skoda's to solve $\xoverline{\partial}\xoverline{\partial}^*_g+\xoverline{\partial}^*_g\xoverline{\partial}$-equations. For smooth projective varieties and essentially Stein manifolds, Varolin proved a related version with holomorphic line bundles possessing singular hermitian metrics (\textit{cf.} \cite[Theorem 1]{Var08}). For further related developments and different perspectives, including precise $L^2$ division theorems and their algebraic applications, we refer the reader to \cite{Ohs02b, Ohs04} and \cite{Siu08, Siu09}. \\
\indent The primary objective of this paper is to establish a Skoda $L^2$ division theorem on compact Kähler manifolds that incorporates singular hermitian metrics, and not just smooth hermitian metrics. Our main result demonstrates that under a natural curvature assumption linking the singular metrics of $F$ and $E$, one can solve the division problem globally with an explicit $L^2$ estimate.

\begin{theorem}\label{MAIN THEOREM 1}
Let $(X,\omega)$ be a compact Kähler manifold of complex dimension $n\geq 1$. Let $F$ and $E$ be holomorphic line bundles with singular hermitian metrics $e^{-\psi}$ and $e^{-\eta}$, respectively. Let $g_1,\cdots,g_p\in H^0(X,E)$. Let $\alpha\in\mathbb{Q}_{>1}$. Let $q=\min\{n,p-1\}$. Assume that 
\vspace{0.2cm}
\[
\sqrt{-1}\partial\xoverline{\partial}\psi\geq\alpha q\cdot \sqrt{-1}\partial\xoverline{\partial}\eta{.}
\]
Then for any $f\in H^0(X,K_X+E+F)$ such that
\vspace{0.2cm}
\[
\int_X\frac{\abs{f}_\omega^2e^{-\psi-\eta}}{(\abs{g}^2e^{-\eta})^{\alpha q+1}}\,dV_\omega<\infty
\]
there exist $p$ sections $h_1,\cdots,h_p\in H^0(X,K_X+F)$ such that
\vspace{0.2cm}
\[
\sum_kh_kg_k=f\,\,\,\text{and}\,\,\,\int_X\frac{\abs{h}_\omega^2e^{-\psi}}{(\abs{g}^2e^{-\eta})^{\alpha q}}\,dV_\omega\leq\Bigl(2+\frac{4(\alpha+1)}{(\alpha-1)^2}\Bigr)\int_X\frac{\abs{f}_\omega^2e^{-\psi-\eta}}{(\abs{g}^2e^{-\eta})^{\alpha q+1}}\,dV_\omega{.}
\]
\end{theorem}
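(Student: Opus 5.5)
We would reduce Theorem \ref{MAIN THEOREM 1} to a Skoda-type $L^2$ estimate on a Zariski open subset with smooth weights, and then pass to the limit. On a compact Kähler manifold the weights $\psi,\eta$ are only quasi-plurisubharmonic, so the first step is regularization. Using Demailly's approximation we choose smooth (or analytic-singularity) weights $\psi_\varepsilon\downarrow\psi$ and $\eta_\varepsilon\downarrow\eta$ with $\sqrt{-1}\partial\xoverline{\partial}\psi_\varepsilon\geq\alpha q\sqrt{-1}\partial\xoverline{\partial}\eta_\varepsilon-\delta_\varepsilon\omega$ and $\delta_\varepsilon\to0$. The curvature hypothesis concerns the difference $\psi-\alpha q\eta$, so we will regularize $\varphi:=\psi-\alpha q\eta$, which is psh as a current, together with $\eta$, and set $\psi_\varepsilon=\varphi_\varepsilon+\alpha q\eta_\varepsilon$. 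We then work on $Y=X\setminus(Z(g)\cup H)$, where $H$ is a hypersurface such that $X\setminus H$ is Stein; this may be taken as the complement of an ample divisor if $X$ is projective, or else one uses that $Y$ carries a complete Kähler metric. Since $L^2$ holomorphic forms extend across analytic sets by Riemann extension, it suffices to solve the division problem on $Y$ with the stated estimate.

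On $Y$ we set up Skoda's functional-analytic argument, following Demailly's proof \cite[Theorem 11.8]{Dem12} for $g:\mathcal{O}^{\oplus p}\otimes F\to E\otimes F$. We use the weight $e^{-\psi}\,(\abs{g}^2e^{-\eta})^{-\alpha q}$ on $h$, and the induced weight $e^{-\psi-\eta}(\abs{g}^2e^{-\eta})^{-\alpha q-1}$ on $f$. The minimal $L^2$ solution $h$ of $gh=f$ is $h=g^*f/\abs{g}^2$ plus a correction $u$ in $\ker g$. Finding $u$ amounts to solving $\xoverline{\partial}u=\xoverline{\partial}(g^*f/\abs{g}^2)$ for sections of the kernel bundle $S=\ker g$ with the quotient metric. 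The key pointwise input is Skoda's inequality:
\[
\abs{\langle \beta, \xoverline\partial(g^*f/\abs{g}^2)\rangle}^2\leq q\,\abs{g}^{-2}\langle[\sqrt{-1}\Theta_{\det Q},\Lambda]\beta,\beta\rangle\cdot(\dots),
\]
where $\det Q$ here is $E$ with metric $\abs{g}^{-2}$-twisted by $e^{-\eta}$. We combine this with the curvature of the twisted weight. The term $\alpha q\sqrt{-1}\partial\xoverline{\partial}\log(\abs{g}^2e^{-\eta})$ contributes $\alpha q$ times the curvature of $\det Q$. The remaining term $\sqrt{-1}\partial\xoverline{\partial}(\psi-\alpha q\eta)\geq0$ is discarded. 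The bound $\geq-\delta_\varepsilon\omega$ instead requires an extra twist by a small multiple of the curvature of $\det Q$ or of $\omega$, with an error that we will absorb.

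To obtain the constant $2+\frac{4(\alpha+1)}{(\alpha-1)^2}$ rather than Skoda's $\alpha/(\alpha-1)$, we expect to apply a twisted Bochner--Kodaira (Ohsawa--Takegoshi style) estimate, with a twisting function of the form $\tau=\log(\abs{g}^2e^{-\eta})$-related terms. This is necessary because $\abs{g}^2e^{-\eta}$ is unbounded and we cannot normalize it to be $\leq1$ on a compact manifold. The estimate reads $\norm{u}^2\leq(1+\frac{4(\alpha+1)}{(\alpha-1)^2})\norm{\cdot}^2$. Adding the trivial piece $g^*f/\abs{g}^2$ through $\abs{a+b}^2\leq2\abs a^2+2\abs b^2$ then gives the total constant $2+\dots$. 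We will choose the twist parameters to optimize and verify that this yields exactly this expression.

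The final step is the limit. The solutions $h_\varepsilon$ on $Y$ are uniformly bounded in weighted $L^2$. The bound holds for the fixed weights with $\psi,\eta$ and $f$'s integral, since $\psi_\varepsilon\geq\psi$ makes the right-hand side monotone, and the $\eta_\varepsilon$ appear only in the combination we control. So a weakly convergent subsequence gives a holomorphic $h$ with $gh=f$. The estimate follows from Fatou and monotone convergence, and $h$ extends to $X$. The main obstacle is the regularization step. On a compact Kähler manifold we lose exact positivity, the error $\delta_\varepsilon\omega$ must be absorbed uniformly, and $\eta_\varepsilon$ enters with both signs in the weight. If this fails, we would first try to work instead with the analytic-singularity approximation and the Demailly--Peternell--Schneider equisingular version, which preserve multiplier ideals.
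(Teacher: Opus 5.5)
Your overall architecture matches the paper's: regularize the psh weight $\psi-\alpha q\eta$, work on a Zariski-open set with a complete Kähler metric, reduce to $\bar\partial u=f\wedge\bar\partial(\bar g/\abs{g}^2)$ with $g\cdot u=0$, apply Skoda's inequality, then pass to limits and extend. But the step you flag as the main obstacle is where the proof actually lives, and the remedies you suggest do not work.

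After regularization one only has $\sqrt{-1}\partial\bar\partial\phi_\rho\geq-\delta_\rho\omega$, with $\phi_\rho$ singular along $E_\rho$. (Smooth approximants with uniformly small loss do not exist when the class is not nef.)
\begin{itemize}
\item Twisting by a multiple of $\sqrt{-1}\Theta_{\det Q}$ cannot absorb $\delta_\rho\omega$. With the quotient metric this form is $\sqrt{-1}\partial\bar\partial\log\abs{g}^2$, the pullback of Fubini--Study under $[g]$. It has rank at most $p-1$ and is degenerate in general; any $\eta$-contribution has no sign.
\item Twisting by $\omega$ needs a function $\chi$ on $Y$, bounded below, with $\sqrt{-1}\partial\bar\partial\chi\geq c\,\omega$. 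This exists if $Y$ misses an ample divisor, but not in general. On a generic torus there are no hypersurfaces, so $X\setminus Y$ has codimension $\geq 2$, and every psh function on $Y$ extends to $X$ and is constant. The same example rules out your Stein complement $X\setminus H$.
\end{itemize}

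The missing idea is to keep the loss as a zeroth-order term in the equation. Take $\gamma=(\alpha+1)/2$, $\Psi_\rho$ as in Step 3, and $C_\rho=\frac{2(\alpha-1)}{\alpha+1}\delta_\rho$. For $\ker g$-valued test forms $u$, the quadratic form $\norm{\bar\partial^*_g u}^2+\norm{\bar\partial u}^2+C_\rho\norm{u}^2$ dominates $\int\langle[\Theta_\rho,\Lambda]u,u\rangle$ with $\Theta_\rho=\sqrt{-1}\partial\bar\partial\Psi_\rho+C_\rho\omega_{k,\rho}>0$. One then solves $(\square_g+C_\rho)w=v$ inside $S_1$, so the error term also lies in $\ker g$. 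This gives $\norm{\bar\partial^*_g w}^2\leq M$ uniformly, while $\norm{C_\rho w}^2\leq C_\rho M\to 0$.

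Consequently the approximate solutions are not holomorphic, and your closing ``weak limit of holomorphic $h_\varepsilon$'' must be replaced. One passes to the limit in $\bar\partial\xi_\rho+\zeta_\rho+C_\rho w_\rho=v$ on domains $\Omega_\rho$ that shrink as $\rho\to 0$; the cut-off argument on the complete metrics kills $\zeta$. The identity is then extended across $Z_\rho$ by Demailly's lemma, which uses $v\in L^1_{\mathrm{loc}}$.

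Two further corrections.

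\emph{Regularizing $\eta$.} This is neither possible nor needed. Nothing is assumed about $\sqrt{-1}\partial\bar\partial\eta$, so $\eta$ need not be quasi-psh. In a local frame of $E$ the two integrands are $\abs{h}^2e^{-(\psi-\alpha q\eta)}\abs{g}^{-2\alpha q}$ and $\abs{f}^2e^{-(\psi-\alpha q\eta)}\abs{g}^{-2(\alpha q+1)}$, so $\eta$ cancels identically. It already does so with your $\psi_\varepsilon=\varphi_\varepsilon+\alpha q\eta_\varepsilon$, and the ``both signs'' obstacle is illusory. The paper regularizes only $\phi=N(\psi-\alpha q\eta)$ and uses $\varphi_\rho=\phi_\rho/N+\alpha q\log\abs{g}^2$.

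\emph{The constant.} No Ohsawa--Takegoshi-type twist is needed. The target $2+\frac{4(\alpha+1)}{(\alpha-1)^2}$ exceeds Skoda's $\frac{\alpha}{\alpha-1}$, and no normalization $\abs{g}^2e^{-\eta}\leq 1$ enters anywhere. In the paper the constant comes from the Cauchy--Schwarz split $\bar\partial^*=\bar\partial^*_g+\Phi_g$ with parameter $\gamma$. With $I$ the $f$-integral, this gives $\norm{\xi}^2\leq\frac{\gamma}{(\gamma-1)(\alpha-\gamma)}\,I=\frac{2(\alpha+1)}{(\alpha-1)^2}\,I$ at $\gamma=\frac{\alpha+1}{2}$, followed by $\abs{h}^2\leq 2\abs{\xi}^2+2\abs{f}^2/\abs{g}^2$. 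Your bookkeeping combines a bound $1+\frac{4(\alpha+1)}{(\alpha-1)^2}$ on $u$ with $\abs{a+b}^2\leq 2\abs{a}^2+2\abs{b}^2$. That would produce $4+\frac{8(\alpha+1)}{(\alpha-1)^2}$, not the stated constant.
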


Using \Cref{MAIN THEOREM 1}, we construct another type of Skoda division theorem which can be more convenient to apply in some situations. 

\begin{theorem} \label{MAIN THEOREM 2}
Let $(X,\omega)$ be a compact Kähler manifold of complex dimension $n\geq 1$. Let $F$ and $E$ be holomorphic line bundles with singular hermitian metrics $e^{-\psi}$ and $e^{-\eta}$ with nonnegative curvatures, respectively. That is, $\sqrt{-1}\partial\xoverline{\partial}\psi\geq 0$ and $\sqrt{-1}\partial\xoverline{\partial}\eta\geq 0$. Let $g_1,\cdots,g_p\in H^0(X,E)$ and $q\geq\min\{n,p-1\}$ be an integer. If $\min\{n,p-1\}>0$, then for any $f\in H^0(X,K_X+(q+2)E+F)$ such that
\vspace{0.2cm}
\[
\int_X \frac{\abs{f}_\omega^2e^{-\psi-(q+2)\eta}}{(\abs{g}^2e^{-\eta})^{q+2}}\,dV_\omega <\infty{,}
\]
there exist $p$ sections $h_1,\cdots,h_p\in H^0(X,K_X+(q+1)E+F)$ such that
\vspace{0.2cm}
\[
\sum_kh_kg_k=f\,\,\,\text{and}\,\,\,\int_X \frac{\abs{h}_\omega^2e^{-\psi-(q+1)\eta}}{(\abs{g}^2e^{-\eta})^{q+1}}\,dV_\omega \leq(8q^2+4q+2)\int_X \frac{\abs{f}_\omega^2e^{-\psi -(q+2)\eta}}{(\abs{g}^2e^{-\eta})^{q+2}}\,dV_\omega{.}
\]
If $\min\{n,p-1\}=0$, then for any $f\in H^0(X, K_X+ (q+2)E+ F)$ satisfying
\[
\int_X \frac{\abs{f}^2_\omega e^{-\psi -(q+2)\eta}}{\abs{g_1}^2 e^{-\eta}}\, dV_\omega< \infty{,}
\]
there exists a section $h_1\in H^0(X, K_X+ (q+1)E+ F)$ such that $f= g_1h_1$ and
\[
\int_X \abs{h_1}_\omega^2 e^{-\psi -(q+1)\eta}\, dV_\omega= \int_X \frac{\abs{f}_\omega^2 e^{-\psi -(q+2)\eta}}{\abs{g_1}^2 e^{-\eta}}\, dV_\omega{.}
\]
\end{theorem}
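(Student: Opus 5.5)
We deduce \Cref{MAIN THEOREM 2} from \Cref{MAIN THEOREM 1} by twisting $F$ with a multiple of $E$ and choosing $\alpha$ so that the curvature hypothesis holds automatically. Write $q_0=\min\{n,p-1\}$.

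\emph{Case $q_0>0$.} Since $q\geq q_0$ and $q_0\geq 1$, put
\[
\alpha=\frac{q+1}{q_0}\in\mathbb{Q}_{>1}.
\]
Then $\alpha q_0=q+1$. Apply \Cref{MAIN THEOREM 1} with $E$ and $e^{-\eta}$ unchanged, and with $F$ replaced by $F'=F+(q+1)E$, carrying the singular metric $e^{-\psi'}$, $\psi'=\psi+(q+1)\eta$. The curvature condition of \Cref{MAIN THEOREM 1} reads
\[
\sqrt{-1}\partial\xoverline{\partial}\psi'=\sqrt{-1}\partial\xoverline{\partial}\psi+(q+1)\sqrt{-1}\partial\xoverline{\partial}\eta\geq \alpha q_0\,\sqrt{-1}\partial\xoverline{\partial}\eta,
\]
and this holds because $\sqrt{-1}\partial\xoverline{\partial}\psi\geq 0$. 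We have $K_X+E+F'=K_X+(q+2)E+F$, and the integrand becomes
\[
\frac{\abs{f}^2_\omega e^{-\psi-(q+2)\eta}}{(\abs{g}^2e^{-\eta})^{q+2}}.
\]
So the hypothesis of \Cref{MAIN THEOREM 1} is exactly our integrability assumption. The conclusion produces $h_k\in H^0(X,K_X+F')=H^0(X,K_X+(q+1)E+F)$ with $\sum h_kg_k=f$, and the left-hand side of the estimate is the desired one, since $e^{-\psi'}=e^{-\psi-(q+1)\eta}$ and the denominator exponent is $\alpha q_0=q+1$.

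It remains to bound the constant. Note that $\alpha-1=(q+1-q_0)/q_0\geq 1/q_0$. Then
\[
\frac{4(\alpha+1)}{(\alpha-1)^2}=\frac{4q_0(q+1+q_0)}{(q+1-q_0)^2}\leq 4q_0(q+1+q_0)\leq 4q(2q+1)=8q^2+4q,
\]
using $q+1-q_0\geq 1$ and $q_0\leq q$. Adding $2$ gives $8q^2+4q+2$. This constant bookkeeping is the only step requiring care; the main conceptual point is choosing $\alpha$ so that $\alpha q_0$ is exactly the integer $q+1$. One should also check that $\alpha q_0$, and not $\alpha q$, is what enters \Cref{MAIN THEOREM 1}, which it is, since the $q$ there is $\min\{n,p-1\}$.

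\emph{Case $q_0=0$.} Here $p=1$ (as $n\geq 1$), and \Cref{MAIN THEOREM 1} cannot be used because $\alpha q_0=0$. We argue directly: set $h_1=f/g_1$, a meromorphic section of $K_X+(q+1)E+F$. Pointwise,
\[
\abs{h_1}^2_\omega e^{-\psi-(q+1)\eta}=\frac{\abs{f}^2_\omega e^{-\psi-(q+2)\eta}}{\abs{g_1}^2e^{-\eta}},
\]
so the integrals are equal, and in particular finite. Therefore $h_1$ is locally $L^2$ with respect to the weight $e^{-\psi-(q+1)\eta}$. Since $\psi,\eta$ are locally plurisubharmonic up to smooth terms, these weights are locally bounded below, so $h_1$ is locally $L^2$ with respect to a smooth weight. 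A meromorphic function that is locally $L^2$ has no poles: near a generic point of a pole divisor it behaves like $z_1^{-m}u$ with $m\geq1$, which is not $L^2$. Hence $h_1$ is holomorphic, and $f=g_1h_1$ holds with equality of the integrals.
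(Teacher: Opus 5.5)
Your proposal is correct and follows the paper's proof. It uses the same substitution $F\mapsto F+(q+1)E$ with $\psi'=\psi+(q+1)\eta$ and $\alpha=(q+1)/q_0$, the same curvature and integrability checks, and the same bound $2+\frac{4q_0(q+q_0+1)}{(q-q_0+1)^2}\le 8q^2+4q+2$. In the case $q_0=0$, your direct argument with $h_1=f/g_1$ is exactly how the paper proves the $q=0$ case of \Cref{MAIN THEOREM 1}. Contrary to your remark, that case is covered by the theorem, and the paper simply invokes it.
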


Note that the smooth projective varieties version of \Cref{MAIN THEOREM 2} was proved in \cite{Kim16}. \\
\indent As a corollary, we prove a Siu-type division theorem on compact Kähler manifolds with holomorphic line bundles which admit singular hermitian metrics.
\begin{corollary} \label{Corollary 1}
Let $(X,\omega)$ be a compact Kähler manifold of complex dimension $n\geq 1$. Let $L$ and $H$ be holomorphic line bundles, and let $e^{-\varphi}$ be a semi-positive singular hermitian metric of $H$. Let $k\geq1$ be an integer and fix sections $G_1,\cdots,G_p\in H^0(X, L)$. Define the multiplier ideals 
\vspace{0.2cm}
\[
\mathcal{J}_{k+1}=\mathcal{J}(e^{-\varphi}\abs{G}^{-2(n+k+1)})\,\,\,\text{and}\,\,\,\mathcal{J}_{k}=\mathcal{J}(e^{-\varphi}\abs{G}^{-2(n+k)}){.}
\]
Then
\vspace{0.2cm}
\[
H^0(X,((n+k+1)L+H+K_X)\otimes\mathcal{J}_{k+1})=\sum^p_{j=1}G_j\cdot H^0(X,((n+k)L+H+K_X)\otimes\mathcal{J}_k){.}
\]
\end{corollary}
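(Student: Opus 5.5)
The inclusion ``$\supseteq$'' is the easy direction, so I would dispose of it first. If $h_j\in H^0(X,((n+k)L+H+K_X)\otimes\mathcal{J}_k)$, then locally $\abs{h_j}^2e^{-\varphi}\abs{G}^{-2(n+k)}$ is integrable. Since $\abs{G_j}\le\abs{G}$ pointwise (in a local trivialization), we get
\[
\abs{G_jh_j}^2e^{-\varphi}\abs{G}^{-2(n+k+1)}\le\abs{h_j}^2e^{-\varphi}\abs{G}^{-2(n+k)}.
\]
Hence each $G_jh_j$, and therefore $\sum_jG_jh_j$, lies in $H^0(X,((n+k+1)L+H+K_X)\otimes\mathcal{J}_{k+1})$.

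For ``$\subseteq$'' I would apply \Cref{MAIN THEOREM 2} with $E=L$, $g_j=G_j$, and $q=n+k-1$. Since $k\ge1$, we have $q\ge n\ge\min\{n,p-1\}$, so $q+2=n+k+1$ and $q+1=n+k$. The bundle $L$ needs a semi-positive singular metric. I would take $e^{-\eta}=1/\abs{G}^2$, i.e. $\eta=\log\abs{G}^2$, which is plurisubharmonic because the $G_j$ are holomorphic; if all $G_j$ vanish the statement is trivial. Then $\abs{g}^2e^{-\eta}\equiv1$, and the whole weight collapses to
\[
e^{-\psi-(q+2)\eta}=e^{-\psi}\abs{G}^{-2(n+k+1)}.
\]
For $F$ I take $F=H$ with $\psi=\varphi$, which is semi-positive by hypothesis. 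The hypothesis on $f\in H^0(X,((n+k+1)L+H+K_X)\otimes\mathcal{J}_{k+1})$ says $\abs{f}^2e^{-\varphi}\abs{G}^{-2(n+k+1)}$ is locally integrable. Since $X$ is compact, a finite cover gives global finiteness of $\int_X\abs{f}_\omega^2e^{-\psi-(q+2)\eta}\,dV_\omega$.

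\Cref{MAIN THEOREM 2} then yields $h_j\in H^0(X,K_X+(q+1)L+H)$ with $\sum G_jh_j=f$ and $\int_X\abs{h}^2_\omega e^{-\varphi}\abs{G}^{-2(n+k)}\,dV_\omega<\infty$. Because each $\abs{h_j}^2\le\abs{h}^2$, every $h_j$ satisfies the local integrability defining $\mathcal{J}_k$, so $h_j\in H^0(X,((n+k)L+H+K_X)\otimes\mathcal{J}_k)$. The degenerate case $\min\{n,p-1\}=0$, i.e. $p=1$, uses the second half of \Cref{MAIN THEOREM 2} in the same way, with $\abs{g_1}^2e^{-\eta}=1$.

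The main subtlety is checking that $\eta=\log\abs{G}^2$ is an admissible singular metric for \Cref{MAIN THEOREM 2}. It is $-\infty$ on the common zero set, but the theorem only demands nonnegative curvature, which holds. The other point is the passage between local multiplier-ideal membership and the global $L^2$ norms, which needs the equivalence of local norms given by compactness. Instead of this choice one could use an arbitrary smooth metric on $L$, in which case $\abs{g}^2e^{-\eta}$ is bounded above, and the exponent bookkeeping would still go through.
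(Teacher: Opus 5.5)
Your proposal is correct and is essentially the paper's argument. The paper applies \Cref{MAIN THEOREM 1} directly with $F=(n+k)L+H$, $\psi=\varphi+(n+k)\eta$ and $\alpha=(n+k)/n$, after padding $G$ with zero sections so that $q=n$; this is exactly the specialization that \Cref{MAIN THEOREM 2} packages, and you also check the easy inclusion $\supseteq$, which the paper leaves implicit. Only your closing aside is off: \Cref{MAIN THEOREM 2} as stated assumes $\sqrt{-1}\partial\xoverline{\partial}\eta\geq 0$, so an arbitrary smooth metric on $L$ is admissible only if you go through \Cref{MAIN THEOREM 1}, where the curvature condition reduces to $\sqrt{-1}\partial\xoverline{\partial}\varphi\geq 0$, and in the $p=1$ branch your exponent bookkeeping relies on $\abs{g_1}^2e^{-\eta}\equiv 1$.
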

Note that the projective manifolds version is proved in \cite{Var08}. In \cite[Theorem 1.8.3]{Siu05}, Siu obtained the original version of the above corollary on projective manifolds to prove a type of global generation theorem for multiples of globally generated line bundles. Here we construct a version for compact Kähler manifolds.
\begin{corollary} \label{Corollary 2}
Let $F$ be a holomorphic line bundle over a compact Kähler manifold $(X, \omega)$ of complex dimension $n\geq 1$. Let $a\geq1$ and $b\geq 0$ be integers such that $aF$ and $bF-K_X$ are globally generated over $X$. Then the ring $\bigoplus^\infty_{m= 0}H^0(X, mF)$ is generated by $\bigoplus^{(n+2)a+b-1}_{m= 0}H^0(X, mF)$.
\end{corollary}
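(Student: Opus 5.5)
This is Siu's effective global generation argument. We apply Corollary \ref{Corollary 1} with $L=aF$ and $H=mF-(n+k+1)aF-K_X$, for a suitable $k$, inside the ring $R=\bigoplus_m H^0(X,mF)$.

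\textbf{Setup.} Let $N=(n+2)a+b-1$. It suffices to prove that for every $m>N$,
\[
H^0(X,mF)=\sum_j G_j\cdot H^0(X,(m-a)F),
\]
where $G_1,\dots,G_p$ is a basis of $H^0(X,aF)$. Then by induction every section of degree $m>N$ lies in the subring generated by sections of degree at most $N$, since $a\le N$. Fix $m\ge N+1=(n+2)a+b$. Write $m=(n+1)a+b+r$ with $r\ge a$. Put $k=1$, $L=aF$, and
\[
H=(r-a)F+(bF-K_X)+aF=(r)F+bF-K_X .
\]
Then $(n+k+1)L+H+K_X=(n+2)aF+(r-a)F+bF=mF$. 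Likewise $(n+k)L+H+K_X=(m-a)F$.

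\textbf{Choice of metric on $H$.} Since $bF-K_X$ is globally generated, it carries a smooth semipositive metric, given by $\log\sum|s_i|^2$ for generating sections $s_i$. For the remaining part $rF$ we would like a metric built from $aF$, but $r$ need not be a multiple of $a$. The cleanest fix is to give $H$ the metric
\[
e^{-\varphi}=\bigl(\textstyle\sum_i|s_i|^2\bigr)^{-1}\,|G|^{-2r/a},
\]
which is semipositive because $aF$ is globally generated, so $\frac{r}{a}\log|G|^2$ is a psh weight for $rF$.

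\textbf{Multiplier ideals.} With this metric,
\[
\mathcal{J}_{2}=\mathcal{J}\bigl(|G|^{-2(n+2+r/a)}\bigr),
\]
and $|G|^2$ has no common zeros because $aF$ is globally generated. Hence $|G|^2$ is bounded above and below by positive constants, the weight is locally bounded, and $\mathcal{J}_2=\mathcal{J}_1=\mathcal{O}_X$. Corollary \ref{Corollary 1} then gives exactly
\[
H^0(X,mF)=\sum_j G_jH^0(X,(m-a)F).
\]

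\textbf{Main obstacle.} The main obstacle is purely bookkeeping: checking that the exponents line up so the range of $m$ covered is exactly $m\ge (n+2)a+b$. It is also what produces the bound $(n+2)a+b-1$. If fractional $r/a$ is a concern, one can alternatively absorb the remainder using the integer $q\ge\min\{n,p-1\}$ freedom in Theorem \ref{MAIN THEOREM 2}, taking $E=aF$, and apply that theorem directly with trivial integrability conditions.
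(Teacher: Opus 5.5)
Your strategy is the paper's, but the step you singled out as the crux, the bookkeeping, is wrong as written. With $m=(n+1)a+b+r$, $r\geq a$, you set $H=(r-a)F+(bF-K_X)+aF=rF+bF-K_X$. Then
\[
(n+2)L+H+K_X=\bigl((n+2)a+r+b\bigr)F=(m+a)F\neq mF ;
\]
your displayed computation $(n+2)aF+(r-a)F+bF$ silently drops the extra $aF$. Taken literally (with $r\geq a$), \Cref{Corollary 1} applied to this $H$ only gives $H^0(X,m'F)=\sum_j G_j\cdot H^0(X,(m'-a)F)$ for $m'\geq (n+3)a+b$. That yields generation in degrees $\leq (n+3)a+b-1$, which is weaker than claimed.

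The correct choice is the one in your opening sentence: $H=mF-(n+2)aF-K_X=(r-a+b)F-K_X$, with $r-a\geq 0$, carrying the metric $e^{-\varphi}=\bigl(\sum_i|s_i|^2\bigr)^{-1}|G|^{-2(r-a)/a}$. Then $(n+2)L+H+K_X=mF$ and $(n+1)L+H+K_X=(m-a)F$ both hold, and the weight is still psh. Both multiplier ideals are still $\mathcal{O}_X$, because the $G_j$ have no common zero and neither do the $s_i$. The argument then closes.

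With that correction your proof coincides with the paper's. The paper takes $H_l=(b+l)F-K_X$ with metric $\bigl(\sum_j|G_j|^{2l/a}\bigr)^{-1}\bigl(\sum_j|I_j|^2\bigr)^{-1}$ and writes $m=(n+k+1)a+l+b$ with $k\geq 1$, $0\leq l<a$. You instead fix $k=1$ and let the extra multiple of $F$ in $H$ be any nonnegative integer; both parametrizations cover exactly $m\geq (n+2)a+b$.

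The fractional exponent is not a real concern. For $s\geq 0$, $\tfrac{s}{a}\log|G|^2$ is a legitimate psh weight on $sF$, which is exactly what the paper uses. Your suggested fallback through \Cref{MAIN THEOREM 2} would not remove it anyway: with $E=aF$, varying the integer $q$ changes degrees only in steps of $a$. So the residue of $m$ modulo $a$ still has to be carried by the twisting bundle and its metric.
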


Even without the Skoda division theorem, one can show the finite generation of the section ring $\bigoplus^\infty_{m=0}\Gamma(X, mF)$ by inducing a holomorphic surjective fibration from $X$ to some projective variety and concluding the $m$-th Veronese subring of our section ring is finitely generated for some large enough $m$. However, the point here is that the above corollary can provide an effective bound for the degrees of the generators, namely $(n+2)a+b-1$. \\ \\
\indent We briefly explain the sketch of the proof of our main theorem here. \\
\indent Choose $N\in\mathbb{Z}_{>0}$ such that $\alpha N$ is an integer. Then the curvature condition $\sqrt{-1}\partial\xoverline{\partial}\psi\geq\alpha q\cdot \sqrt{-1}\partial\xoverline{\partial}\eta$ implies $e^{-N(\psi-\alpha q\eta)}$ is a singular hermitian metric of the line bundle $NF-\alpha qNE$ with nonnegative curvature. By the Demailly regularization theorem for closed $(1,1)$-currents on a compact Kähler manifold (\textit{cf.} \cite[Theorem 1.1]{Dem94}), we get a family of singular hermitian metrics $\{e^{-\phi_\rho}\}$ of $NF-\alpha qNE$ such that $\phi_\rho$ is smooth on a dense open subset $\Omega_\rho$, which is the complement of a proper analytic subset, and 
\vspace{0.2cm}
\[
\sqrt{-1}\partial\xoverline{\partial} \phi_\rho\geq -N\delta_\rho\omega{,}
\]
where $\lim_{\rho\to 0}\delta_\rho=0$. So $\{e^{-\varphi_\rho}=e^{-((\phi_\rho/N)+\alpha q\log\,\abs{g}^2)}\}$ is a family of singular hermitian metrics of $F$. \\
\indent Next, for each $\rho$ we define a family of metrics $\{\omega_{k,\,\rho}\}^\infty_{k=1}$, a family of smooth functions $\{\kappa_{j,\,k,\,\rho}\}_{j,\,k}$, and two exhaustions $\{\tilde{\Omega}_{k,\,\rho}\}^\infty_{k=1}$ and $\{\Omega_{j,\,k,\,\rho}\}_{j,\,k}$ of $\Omega_\rho$ satisfying
\begin{itemize}
    \item $\omega_{k,\,\rho}$ is a complete Kähler metric defined on $\Omega_\rho$ and $2\omega\geq \omega_{k,\,\rho}$ on $\tilde{\Omega}_{k,\,\rho}$,
    \item $\tilde{\Omega}_{k,\,\rho}\subset \Omega_{j,\,k,\,\rho}$ and $\bigcup_k \tilde{\Omega}_{k,\,\rho}=\bigcup_j\Omega_{j,\,k,\,\rho}=\Omega_\rho$,
    \item $\kappa_{j,\,k,\,\rho}$ has support in $\Omega_{j,\,k,\,\rho}$ and $\kappa_{j,\,k,\,\rho}\equiv 1$ on $\tilde{\Omega}_{k,\,\rho}$,
    \item $\abs{d\kappa_{j,\,k,\,\rho}}_{\omega_{k,\,\rho}}\leq\frac{1}{j}\sup\abs{\chi'}\cdot\abs{dr_{k,\,\rho}}_{\omega_{k,\,\rho}}\leq\frac{A}{j}$.
\end{itemize}
\hspace*{0.8em} We first prove the Skoda division theorem on $\{g_1\neq0\}$ (or we can pick another $g_l$ other than $g_1$). On $\{g_1\neq0\}$, we can regard $E$ as a trivial bundle and define $v=f\wedge\,\xoverline{\partial}(\xoverline{g}/\,\abs{g}^2)$. To prove our main theorem, we aim to solve the $\xoverline{\partial}$-equation $\xoverline{\partial}\xi=v$ with $g\cdot\xi=0$ as a member of 
\vspace{0.2cm}
\[
L^2_{(n,\,0)}(X, F, \tilde{\varphi}, \omega)^{\oplus p}{,}
\]
where $\tilde{\varphi}$ is a smooth hermitian metric we will define during the proof. If we find such $\xi$, letting $h=f(\xoverline{g}/ \abs{g}^2)-\xi$, we can see $\xoverline{\partial}h=0$ and $g\cdot h=f$, which completes the proof. \\
\indent Instead of solving the $\xoverline{\partial}$-equation directly, we solve the perturbed equation
\vspace{0.2cm}
\begin{equation}
(\xoverline{\partial}\xoverline{\partial}^*_{g;\,k,\,\rho}+\xoverline{\partial}^*_{g;\,k,\,\rho}\xoverline{\partial}+C_\rho I)w_{j,\,k,\,\rho}=v
\label{xxx}
\end{equation}
on $\Omega_{j,\,k,\,\rho}$, where $C_\rho$ is chosen to satisfy $\lim_{\rho\to0}C_\rho=0$, and this process heavily depends on the functional analysis theorem \Cref{FA}. Our functional analysis theorem \Cref{FA} is a modified version of the functional analysis theorem which appears in \cite{Che16}. We may refer to \Cref{BKN} to see the definition of the operator $\xoverline{\partial}^*_g$. Using the estimate from \Cref{FA} further, we extract appropriate weak limits $\xi$, $\zeta$, and $\theta$ in 
\vspace{0.2cm}
\[
L^2_{(n,\,0)}(X, F, \tilde{\varphi}, \omega)^{\oplus p}{,}\,\,\,L^2_{(n,\,1)}(X, F, \tilde{\varphi}, \omega)^{\oplus p}{,}\,\,\,\text{and}\,\,\,L^2_{(n,\,1)}(X, F, \tilde{\varphi}, \omega)^{\oplus p}{,}
\]
respectively, starting from the families  
\vspace{0.2cm}
\[
\{\xoverline{\partial}^*_{g;\,k,\,\rho}(w_{j,\,k,\,\rho}\rvert_{\tilde{\Omega}_{k,\,\rho}})\}{,}\,\,\,\{\xoverline{\partial}^*_{g;\,k,\,\rho}\xoverline{\partial}(w_{j,\,k,\,\rho}\rvert_{\tilde{\Omega}_{k,\,\rho}})\}{,}\,\,\,\text{and}\,\,\,\{C_\rho w_{j,\,k,\,\rho}\rvert_{\tilde{\Omega}_{k,\,\rho}}\}{,}
\]
respectively as $j\to\infty$, followed by $k\to\infty$, and finally $\rho\to0$. Then \eqref{xxx} implies $\xoverline{\partial}\xi+\zeta+\theta=v$. Indeed we can prove $\zeta=\theta=0$ as distributions, therefore $\xoverline{\partial}\xi=v$. Since $g\cdot \xoverline{\partial}^*_{g;\,k,\,\rho}(w_{j,\,k,\,\rho}\rvert_{\tilde{\Omega}_{k,\,\rho}})=0$ and this condition is preserved under weak convergence, $g\cdot\xi=0$ also holds, which means $\xi$ is the desired solution. \\
\indent To prove the theorem on the whole $X$, it is enough to apply the Riemann extension theorem by using the $L^2$ finiteness of $\xi$. \\ \\
\indent Our computational strategies are adapted from the methods presented in \cite{Che16}. We explain here some differences between Chen's strategies and ours. \\
\indent In \cite{Che16}, because $\Omega$ is a domain in $\mathbb{C}^n$, it is possible to approximate a plurisubharmonic weight by strictly plurisubharmonic functions using standard convolutions, which leads to strictly positive curvatures. But in our case, we treat a compact Kähler manifold, so we depend on the Demailly regularization theorem for closed positive $(1, 1)$-currents. This leads to not necessarily strictly positive curvatures: $\sqrt{-1}\partial\xoverline{\partial}\phi_\rho\geq -N\delta_\rho\omega$ as explained above. \\
\indent In \cite{Che16}, the curvature considered there is strictly positive, so it is possible to solve the equation $\square_g w_j= v$ directly. But our curvature $\sqrt{-1}\partial\xoverline{\partial}\Psi_\rho$ (\textit{cf.} \Cref{Step3} of the proof of the main theorem) may not be strictly positive. So we need to add an additional strictly positive term, say $C_\rho\omega_{k,\,\rho}$, and solve the perturbed equation 
\vspace{0.2cm}
\[
(\square_{g;\,k,\,\rho}+C_\rho I)w_{j,\,k,\,\rho}= v{.}
\]
\indent \cite{Che16} needs only one parameter to extract required weak limits: $j\to \infty$ to exhaust the open domain $\Omega$ by a sequence of increasing domains $\{\Omega_j\}$. In our case, because we need to regularize the metric $e^{-\phi}$ with $\{e^{-\phi_\rho}\}$ and construct complete metrics $\{\omega_{k,\,\rho}\}$ outside the singular locus $Z_\rho$ (\textit{cf.} \Cref{Step1} of the proof of the main theorem), we need three parameters to extract appropriate weak limits as we explained above: first $j\to \infty$ to exhaust the complete Kähler domain $\Omega_\rho= X\setminus Z_\rho$ with $\{\Omega_{j,\,k,\,\rho}\}$, and $k\to \infty$ to let our metrics $\omega_{k,\,\rho}$ converge back to the original Kähler metric $\omega$, and finally $\rho\to 0$ so that the singular hermitian metrics $\phi_\rho$ converge back to $\phi$. \\
\indent

\subsection*{Acknowledgments}
I would like to express my deep gratitude to Professor Dano Kim for  valuable support and advices.

\vspace{0.5cm}

\section{Preliminaries}

\subsection{Singular hermitian metrics}
Let $L$ be a holomorphic line bundle on a complex manifold $X$. A \textit{singular hermitian metric} on $L$ is a metric which is given in every trivialization $\theta: L\rvert_\Omega\to_{\cong} \Omega\times \mathbb{C}$ by
\vspace{0.2cm}
\[
\abs{\xi}^2_\varphi= \abs{\theta(\xi)}^2 e^{-\varphi(x)}{,}\,\,\,x\in\Omega{,}\,\,\,\xi\in L_x{,}
\]
where $\varphi\in L^1_{\mathrm{loc}}(\Omega)$ is an arbitrary function, called the weight of the metric with respect to the trivialization $\theta$. \\
\indent If $\theta': L\vert_{\Omega'}\to_{\cong} \Omega'\times\mathbb{C}$ is another trivialization, $\varphi'$ the associated weight and $g\in \mathcal{O}^*(\Omega\cap \Omega')$ the transition function, then $\theta'(\xi)=g(x)\theta(\xi)$ for $\xi\in L_x$, and therefore $\varphi'= \varphi+\log\,\abs{g}^2$ on $\Omega\cap \Omega'$. \\
\indent For more details about singular hermitian metrics, we may refer to \cite[3.12]{Dem12}.

\subsection{Global approximation of closed $(1,1)$-currents on a compact Kähler manifold} This subsection introduces a regularization technique for closed $(1,1)$-forms derived from quasi-plurisubharmonic functions. Applying this method to a metric of a holomorphic line bundle with plurisubharmonic weight yields a sequence of approximating metrics that converges to the original metric. These approximating metrics satisfy desirable curvature conditions and are smooth over dense open subsets. 
\vspace{0.2cm}
\begin{theorem}{\cite[Theorem 1.1]{Dem94}} \label{approx}
Let $(X,\omega)$ be a compact complex manifold with a Kähler metric $\omega$.  Assume that 
\setlength{\parskip}{0.5\baselineskip}
\[
T=\tilde{T}+\sqrt{-1} \partial\xoverline{\partial}\sigma
\] 
is a closed $(1,1)$-current on $X$, where $\tilde{T}$ is a smooth real $(1,1)$-form and $\sigma$ is a quasi-plurisubharmonic function. Let $\gamma$ be a continuous real $(1,1)$-form such that $T\geq\gamma$. Then there is a family of closed $(1,1)$-currents 
\setlength{\parskip}{0.5\baselineskip}
\[
T_{\rho}=\tilde{T}+ \sqrt{-1}\partial\xoverline{\partial}\sigma_{\rho}
\] 
on $X$ ($\rho>0$), such that
\begin{itemize}
\item $\sigma_{\rho}$ is a quasi-plurisubharmonic function on $X$, smooth on $X\setminus E_\rho(T)$, increasing with respect to $\rho$, and converges to $\sigma$ on $X$ as $\rho\to0$,
\item $T_{\rho}\geq\gamma-\delta_\rho\omega$, 
\end{itemize}
where $E_\rho(T)$ is the $\rho$-upperlevel set of Lelong numbers, and $\{\delta_\rho\}$ is a family of positive numbers decreasing to 0 as $\rho\to 0$.
\end{theorem}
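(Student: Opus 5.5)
The plan is to reproduce Demailly's regularization via the exponential map together with a Kiselman–Legendre transform in the scaling parameter. Since $T-\tilde T=\sqrt{-1}\partial\xoverline{\partial}\sigma$ and $\tilde T$ is smooth, it suffices to regularize the quasi-plurisubharmonic function $\sigma$. The goal is to produce $\sigma_\rho$ with three properties:
\begin{itemize}
\item $\sqrt{-1}\partial\xoverline{\partial}\sigma_\rho\ge \gamma-\tilde T-\delta_\rho\omega$;
\item $\sigma_\rho$ is monotone in $\rho$;
\item $\sigma_\rho$ is smooth off $E_\rho(T)$.
\end{itemize}
Compactness of $X$ is used throughout to make all constants uniform.

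\textbf{Step 1 (convolution along geodesics).} Fix a smooth radial cutoff $\chi\ge 0$ on $\mathbb{C}^n$ with $\int\chi=1$. For $x\in X$ and $\varepsilon\in(0,\varepsilon_0)$, identify $T_xX\cong\mathbb{C}^n$ by an $\omega$-orthonormal frame and set
\[
\Phi(x,\varepsilon)=\int_{\zeta\in T_xX}\sigma\bigl(\exp_x(\varepsilon\zeta)\bigr)\chi(\abs{\zeta}^2)\,d\lambda(\zeta){.}
\]
Here $\exp$ is taken to be a quasi-holomorphic modification of the Riemannian exponential, i.e.\ its $\xoverline{\partial}$-part in $\zeta$ vanishes to second order. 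I would then prove the key Hessian estimate: there are a constant $K$ and a continuous function $\lambda(x,\varepsilon)$ such that
\[
\sqrt{-1}\partial\xoverline{\partial}_{x,w}\bigl(\Phi(x,\abs{w})+K\abs{w}^2\bigr)\ \ge\ \gamma-\tilde T-\lambda(x,\abs{w})\,\omega-K\abs{w}^2\omega
\]
in the variables $(x,w)$ with $w\in\mathbb{C}$, where $\lambda(x,\varepsilon)\to0$ as $\varepsilon\to 0$ uniformly in $x$ by uniform continuity of $\gamma$. The argument follows the flat case, in which $\Phi$ is plurisubharmonic in $(x,w)$ and increasing in $\log\abs{w}$. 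The Kähler condition ensures that the curvature correction terms coming from $\exp$ are only $O(\abs{w}^2)$. This computation, in which one must bound the error from the non-holomorphicity of $\exp$ by the Lelong-number-independent quantity $K\abs{w}^2$, is the step I expect to be the main obstacle.

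\textbf{Step 2 (Kiselman–Legendre transform).} Define
\[
\sigma_{c,\varepsilon}(x)=\inf_{t\in(0,\varepsilon]}\Bigl(\Phi(x,t)+Kt^2-c\log\frac{t}{\varepsilon}\Bigr){.}
\]
By Kiselman's minimum principle, an infimum over $\log\abs{w}$ of a function plurisubharmonic in $(x,w)$ and depending only on $\abs{w}$ is plurisubharmonic in $x$. This yields
\[
\sqrt{-1}\partial\xoverline{\partial}\sigma_{c,\varepsilon}\ge\gamma-\tilde T-\delta\,\omega
\]
with $\delta\to0$ as $\varepsilon\to0$. Since $\Phi(x,t)+Kt^2$ is increasing in $t$, the functions $\sigma_{c,\varepsilon}$ increase in $\varepsilon$ and converge to $\sigma$ as $\varepsilon\to0$.

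\textbf{Step 3 (smoothness off $E_c$).} The slope $\partial\Phi/\partial\log t$ tends to the Lelong number $\nu(T,x)$ as $t\to0$. Hence if $\nu(T,x)<c$, the infimum is attained at some $t>0$ bounded below locally uniformly near $x$. Since $\Phi$ is smooth for $t>0$, this makes $\sigma_{c,\varepsilon}$ continuous, and after one further standard smoothing by Richberg's regularized maximum with small loss, smooth near $x$.

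\textbf{Step 4 (coupling the parameters).} Finally couple $c$ and $\varepsilon$ to the single parameter $\rho$. Take $c=\rho$ and $\varepsilon=\varepsilon(\rho)\downarrow0$, and add a constant multiple of $\delta_\rho$ times a fixed bounded smooth potential if needed to restore monotonicity in $\rho$. This gives the family $T_\rho=\tilde T+\sqrt{-1}\partial\xoverline{\partial}\sigma_\rho$ with all properties in \Cref{approx}.
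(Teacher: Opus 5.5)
The paper does not prove this statement; it quotes \cite[Theorem 1.1]{Dem94}. Your outline is the architecture of Demailly's proof there: quasi-holomorphic exponential map, the averages $\Phi(x,t)$, the Kiselman--Legendre transform in $\log t$, and Lelong numbers as limiting slopes.

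\textbf{The gap is the Hessian estimate in your Step 1, which Step 2 inherits.} The map $(x,\zeta)\mapsto\exp_x(\zeta)$ fails to be holomorphic in the base point $x$, and that failure is governed by the curvature of $T_X$. In the Hessian of $\Phi$ it gets multiplied by derivatives of $\sigma$ at distance about $\abs{w}$ from $x$. Near a point of Lelong number $\nu$ these derivatives have size $\nu/\abs{w}$ and $\nu/\abs{w}^2$. So the error is of order $\nu$, not $O(\abs{w}^2)$, whether or not $X$ is Kähler. What is actually true has the shape
\[
\sqrt{-1}\partial\bar\partial\bigl(\Phi(x,\abs{w})+K\abs{w}^2\bigr)\geq\gamma-\tilde{T}-\lambda(x,\abs{w})\,u-K\abs{w}^2\omega-\delta(\abs{w})\,\omega
\]
in the variables $(x,w)$. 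Here $u$ is a continuous positive $(1,1)$-form dominating the negative part of the curvature of $T_X$. The function $\lambda(x,t)=\partial(\Phi(x,t)+Kt^2)/\partial\log t$ is the slope, and it tends to $\nu(T,x)$, not to $0$. Only $\delta$ comes from the uniform continuity of $\gamma$.

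Your version cannot hold. Take $\gamma=0$ and a fixed $c>\sup_X\nu(T,\cdot)$, so that $E_c(T)=\emptyset$. Combined with your Steps 2 and 3, it would give smooth closed forms $\tilde{T}+\sqrt{-1}\partial\bar\partial\sigma_{c,\varepsilon}\geq-\delta\omega$ in the class of $T$, with $\delta\to0$. That would make every pseudoeffective class nef. The current of integration over the exceptional curve $E$ of a point blow-up of a surface is a counterexample, since $\{E\}\cdot E<0$.

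\textbf{The repair is to carry the term $-\lambda u$ through Step 2.}
\begin{itemize}
\item Since $\Phi+Kt^2$ is convex and increasing in $\log t$, off $E_c(T)$ the infimum defining $\sigma_{c,\varepsilon}$ is attained at a $t$ where the slope is at most $c$. It equals $c$ at an interior minimizer and is at most $c$ when the minimizer is $t=\varepsilon$.
\item A localized Kiselman argument is needed anyway, because the error now depends on $t$. It gives $\tilde{T}+\sqrt{-1}\partial\bar\partial\sigma_{c,\varepsilon}\geq\gamma-\min\{\lambda_\varepsilon,c\}\,u-\delta_\varepsilon\omega$, with $\lambda_\varepsilon=\lambda(\cdot,\varepsilon)$.
\item This is Demailly's theorem in its actual two-parameter form. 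The one-parameter statement quoted in the paper is its specialization $c=\rho$.
\end{itemize}
So your Step 4 is the heart of the matter, not bookkeeping. With $c=\rho$, the extra loss is at most $\rho u\leq C\rho\,\omega$ by compactness, and it is absorbed into $\delta_\rho$. The negative part tends to $0$ because $c\to0$, not because the curvature error is small.

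Monotonicity in $\rho$ is then automatic: $-c\log(t/\varepsilon)\geq0$ makes $\sigma_{c,\varepsilon}$ increasing in $c$ as well as in $\varepsilon$. No extra potential is needed.
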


By \cite{Siu74}, $E_\rho(T)$ is a closed analytic subset of codimension at least one, so $X\setminus E_\rho(T)$ is never empty. \\ 
\indent To apply \Cref{approx} to a singular hermitian metric $e^{-\phi}$ with the curvature condition $\sqrt{-1}\partial\xoverline{\partial}\phi\geq 0$, we must first resolve a technical issue. Since the curvature condition is given only in the sense of currents, the local weights $\phi$ might not be perfectly plurisubharmonic everywhere, that is, they are merely equal to plurisubharmonic functions almost everywhere. 

\begin{lemma} \label{almoste}
Let $X$ be a complex manifold. Let $F$ be a holomorphic line bundle with singular hermitian metric $e^{-\phi'}$. Assume that $\sqrt{-1}\partial\xoverline{\partial}\phi'\geq 0$ in the sense of currents. Then there exists a singular hermitian metric $e^{-\phi}$ such that the weight $\phi$ is plurisubharmonic and $\phi$ equals $\phi'$ almost everywhere.
\end{lemma}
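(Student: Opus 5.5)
The plan is to produce $\phi$ locally by regularizing $\phi'$ by convolution, and then to check that the local plurisubharmonic representatives patch together as weights of a single metric. The main analytic input is the classical fact that a distribution $u$ on a domain $U\subset\mathbb{C}^n$ with $\sqrt{-1}\partial\xoverline{\partial}u\geq 0$ coincides, as a distribution, with a unique plurisubharmonic function. Concretely, fix a trivialization $\theta:F\rvert_U\cong U\times\mathbb{C}$ with $U$ a coordinate ball, and let $\phi'_U\in L^1_{\mathrm{loc}}(U)$ be the associated weight. On relatively compact subballs I form the convolutions $\phi'_U*\chi_\varepsilon$ with a standard radial mollifier. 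Because $\sqrt{-1}\partial\xoverline{\partial}$ commutes with convolution and convolution preserves positivity of currents, these are smooth plurisubharmonic functions.

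Next I need monotonicity in $\varepsilon$. The smooth psh functions $\phi'_U*\chi_\varepsilon$ satisfy $(\phi'_U*\chi_\varepsilon)*\chi_\delta=(\phi'_U*\chi_\delta)*\chi_\varepsilon$, and for a smooth psh function $u$ one has $u*\chi_\delta\ge u$, decreasing in $\delta$ by the sub-mean value property over spheres. Combining these shows $\phi'_U*\chi_\varepsilon$ is nonincreasing as $\varepsilon\downarrow0$. I then define $\phi_U=\lim_{\varepsilon\to0}\phi'_U*\chi_\varepsilon$. As a decreasing limit of psh functions, $\phi_U$ is psh, or identically $-\infty$ on a component; the latter is excluded because the limit agrees with $\phi'_U$ in $L^1_{\mathrm{loc}}$. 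That agreement holds because $\phi'_U*\chi_\varepsilon\to\phi'_U$ in $L^1_{\mathrm{loc}}$, so a subsequence converges almost everywhere, which forces $\phi_U=\phi'_U$ almost everywhere. I expect this monotonicity/identification step to be the main technical point, though it is standard (see e.g. Hörmander or Demailly's book).

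Gluing comes last. On an overlap $U\cap U'$ the weights satisfy $\phi'_{U'}=\phi'_U+\log\abs{g}^2$ almost everywhere, with $g\in\mathcal{O}^*(U\cap U')$. Both $\phi_{U'}$ and $\phi_U+\log\abs{g}^2$ are psh there, since $\log\abs{g}^2$ is pluriharmonic, and they agree almost everywhere. Two psh functions equal almost everywhere are equal everywhere, because each is recovered pointwise as the limit of its mean values over small balls, and those mean values depend only on the almost-everywhere class. Hence $\phi_{U'}=\phi_U+\log\abs{g}^2$ holds everywhere on $U\cap U'$. The family $\{\phi_U\}$ therefore transforms correctly and defines a singular hermitian metric $e^{-\phi}$ on $F$ with psh weights equal to $\phi'$ almost everywhere. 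The same pointwise-recovery fact shows $\phi_U$ does not depend on the choice of subball or mollifier, so no ambiguity arises in the construction.
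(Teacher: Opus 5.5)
Your proof is correct, but the local step takes a different route from the paper. The paper shrinks the cover and invokes the existence of a local psh potential $\tilde{\phi}_\alpha$ for the closed positive current $\sqrt{-1}\partial\xoverline{\partial}\phi'_\alpha$ (Demailly, III.1.19). It then applies Weyl's lemma to $\phi'_\alpha-\tilde{\phi}_\alpha$, whose complex Hessian vanishes, to get a pluriharmonic $h_\alpha$ equal to it almost everywhere, and takes $\tilde{\phi}_\alpha+h_\alpha$ as the representative.

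You instead prove directly the classical fact that an $L^1_{\mathrm{loc}}$ function with $\sqrt{-1}\partial\xoverline{\partial}\phi'\geq 0$ agrees almost everywhere with the decreasing limit of its radial mollifications, which is psh. Your argument is self-contained, using only the sub-mean value property and elementary facts about convolution. It also needs no shrinking of the cover and produces a canonical representative. The paper's version is shorter but outsources the analysis to two cited results, and the first of these is usually proved through exactly the regularization fact you establish. The gluing step is identical in both: psh functions equal almost everywhere are equal everywhere, by recovering values as limits of ball averages.

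One wording fix. For smooth psh $u$, $u*\chi_\delta$ is nondecreasing in $\delta$; it decreases to $u$ as $\delta\downarrow 0$, rather than being ``decreasing in $\delta$''. With $u_\varepsilon=\phi'_U*\chi_\varepsilon$, the combining step then runs as follows. First, $u_\varepsilon*\chi_\delta=u_\delta*\chi_\varepsilon$ is nondecreasing in $\varepsilon$, because $u_\delta$ is smooth psh. Then letting $\delta\to 0$, using continuity of $u_\varepsilon$, shows that $u_\varepsilon$ is nondecreasing in $\varepsilon$.
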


\begin{proof}
Take an open cover $\{U_\alpha\}$ of $X$ such that each $U_\alpha$ is biholomorphic to an open domain in $\mathbb{C}^n$. By \cite[~\rom{3}.1.19]{Dem97}, shrinking $U_\alpha$ if necessary, we may assume there is a plurisubharmonic function $\tilde{\phi}_{\alpha}$ on $U_\alpha$ such that $\sqrt{-1}\partial\xoverline{\partial}\phi'_\alpha=\sqrt{-1}\partial\xoverline{\partial}\tilde{\phi}_\alpha$. By Weyl's lemma, there exists a pluriharmonic function $h_\alpha$ on $U_\alpha$ which equals $\phi'_\alpha-\tilde{\phi}_\alpha$ almost everywhere. In other words, $\phi'_\alpha$ equals the plurisubharmonic function $\tilde{\phi}_\alpha+ h_\alpha$ almost everywhere. \\
\indent We claim the weights $\tilde{\phi}_\alpha+ h_\alpha$ define a singular hermitian line metric of $F$. Let $\{\phi'_\alpha\}$ be the local weights of the metric $e^{-\phi'}$ with respect to the local trivializations on $U_\alpha$. On the overlap $U_\alpha\cap U_\beta$, the local frames are related by $e_\beta=g_{\alpha\beta}e_\alpha$, where $g_{\alpha\beta}\in\mathcal{O}^*_X(U_\alpha\cap U_\beta)$ is a nowhere-vanishing holomorphic transition function, which means
\vspace{0.2cm}
\[
\phi'_\alpha= \phi'_\beta+\log\,\abs{g_{\alpha\beta}}^2{.}
\] 
So the equality
\vspace{0.2cm}
\[
\tilde{\phi}_\alpha+h_\alpha= \phi'_\alpha=\phi'_\beta+\log\,\abs{g_{\alpha\beta}}^2=\tilde{\phi}_\beta+h_\beta+ \log\,\abs{g_{\alpha\beta}}^2
\]
holds almost everywhere on $U_\alpha\cap U_\beta$. But this implies the above equality holds everywhere on $U_\alpha\cap U_\beta$ by the following fact: for any subharmonic function $u$ on a domain, the equality
\vspace{0.2cm}
\[
u(z)=\lim_{r\to 0}\frac{1}{\operatorname{Vol}_{\lambda}B(z, r)}\int_{B(z, r)}u(w)\,d\lambda(w)
\]
holds for all $z$ in the domain. Letting $\phi_\alpha= \tilde{\phi}_\alpha+ h_\alpha$, we finish the proof.
\end{proof}

The next lemma follows from \Cref{almoste} and it fits our situation in \Cref{MAIN THEOREM 1}.

\begin{lemma} \label{074}
Let $X$ be a complex manifold. Let $F$ and $E$ be holomorphic line bundles with singular hermitian metrics $e^{-\psi}$ and $e^{-\tau}$, respectively. Assume that 
\vspace{0.2cm}
\[
\sqrt{-1}\partial\xoverline{\partial}\psi\geq \sqrt{-1}\partial\xoverline{\partial}\tau{.}
\]
Then there exists a singular hermitian metric $e^{-\phi}$ on the holomorphic line bundle $F-E$ such that $\phi$ is plurisubharmonic and equals $\psi- \tau$ almost everywhere. Note that $\psi- \tau$ might not be well-defined on the locus $\{\psi= \pm\infty\}\cup\,\{\tau= \pm\infty\}$, which is of measure zero.
\end{lemma}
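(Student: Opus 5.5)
The plan is to reduce the statement to \Cref{almoste} applied to the line bundle $F-E$. First I will check that the local weights $\phi'_\alpha:=\psi_\alpha-\tau_\alpha$ define a singular hermitian metric $e^{-\phi'}$ on $F-E$.

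Fix an open cover $\{U_\alpha\}$ on which both $F$ and $E$ are trivialized, with transition functions $a_{\alpha\beta}$ for $F$ and $b_{\alpha\beta}$ for $E$. Then $F-E$ is trivialized on the same cover with transition functions $a_{\alpha\beta}b_{\alpha\beta}^{-1}$. The local weights $\psi_\alpha,\tau_\alpha$ are in $L^1_{\mathrm{loc}}$, so each set $\{\psi_\alpha=\pm\infty\}\cup\{\tau_\alpha=\pm\infty\}$ has measure zero. Off this null set, define $\phi'_\alpha=\psi_\alpha-\tau_\alpha$, and set it to $0$ on the null set. This gives $\phi'_\alpha\in L^1_{\mathrm{loc}}(U_\alpha)$, since it is a difference of $L^1_{\mathrm{loc}}$ functions almost everywhere.

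Next I check compatibility on overlaps. Subtracting the two cocycle relations $\psi_\alpha=\psi_\beta+\log|a_{\alpha\beta}|^2$ and $\tau_\alpha=\tau_\beta+\log|b_{\alpha\beta}|^2$ gives
\[
\phi'_\alpha=\phi'_\beta+\log\,\abs{a_{\alpha\beta}b_{\alpha\beta}^{-1}}^2
\]
almost everywhere on $U_\alpha\cap U_\beta$. This exact identity can fail on the null set, which is a subtlety to watch. I have two ways to handle it. One is to treat singular metrics modulo null sets, which is harmless because everything downstream is integral or current-theoretic. The cleaner one is to note that the proof of \Cref{almoste} only uses the almost-everywhere cocycle relation: the everywhere identity for the psh representatives is recovered there via the mean-value property. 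So I would invoke \Cref{almoste} in the form ``almost-everywhere compatible weights,'' and point out that its proof goes through verbatim.

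Then the curvature condition. As currents, $\sqrt{-1}\partial\bar\partial\phi'_\alpha=\sqrt{-1}\partial\bar\partial\psi_\alpha-\sqrt{-1}\partial\bar\partial\tau_\alpha$, because distributional derivatives do not see null sets and are linear. By hypothesis this current is $\geq0$. \Cref{almoste} (or its proof) then yields psh weights $\phi_\alpha=\tilde\phi_\alpha+h_\alpha$ equal to $\phi'_\alpha$ almost everywhere. These satisfy the cocycle relation everywhere, so they define a singular hermitian metric $e^{-\phi}$ on $F-E$ with $\phi$ plurisubharmonic and $\phi=\psi-\tau$ almost everywhere.

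The only real obstacle is the well-definedness issue on $\{\psi=\pm\infty\}\cup\{\tau=\pm\infty\}$. It is resolved by the measure-zero observation together with the a.e.-to-everywhere upgrade already contained in the proof of \Cref{almoste}.
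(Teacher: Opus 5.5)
Your proof is correct and follows the paper's route: define $\phi'=\psi-\tau$ off the null locus, observe $\sqrt{-1}\partial\xoverline{\partial}\phi'\geq 0$ as currents, and invoke \Cref{almoste}. The only difference is cosmetic. The paper sets $\phi'=-\infty$ on $\{\psi=\pm\infty\}\cup\{\tau=\pm\infty\}$; since this locus does not depend on the trivialization, the cocycle relation then holds everywhere and \Cref{almoste} applies verbatim. Your choice of $0$ there instead forces the detour through the almost-everywhere version of that lemma's proof, which is nonetheless valid.
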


\begin{proof}
The difference $\psi-\tau$ of two weights might not be well-defined on the locus $\{\psi= \pm\infty\}\cup\,\{\tau= \pm\infty\}$. So we temporarily define a weight of a singular hermitian metric $\phi'$ on $F-E$ by
\vspace{0.2cm}
\[
\phi'(x)=
\begin{cases}
(\psi-\tau)(x) & x\notin\{\psi= \pm\infty\}\cup\,\{\tau= \pm\infty\} \\
-\infty &  x\in\{\psi= \pm\infty\}\cup\,\{\tau= \pm\infty\}
\end{cases}{.}
\]
Then $\phi'$ is a well-defined weight of a singular hermitian metric, namely $e^{-\phi'}$, and it satisfies $\sqrt{-1}\partial\xoverline{\partial}\phi'\geq 0$ in the sense of currents. By \Cref{almoste}, we conclude there exists a singular hermitian metric $e^{-\phi}$ on $F-E$ such that $\phi$ is plurisubharmonic and equals $\phi'$ almost everywhere, which means it equals $\psi-\tau$ almost everywhere.
\end{proof}

Once we obtain a singular hermitian metric whose weight is plurisubharmonic, we can regularize it with singular hermitian metrics with nice properties by applying \Cref{approx}.

\begin{lemma} \label{regularization}
Let $(X, \omega)$ be a compact Kähler manifold. Let $F$ be a holomorphic line bundle with singular hermitian metric $e^{-\phi}$. Assume that the weight $\phi$ is plurisubharmonic. Then there exists a sequence of singular hermitian metrics $\{e^{-\phi_\rho}\}$ of $F$ and a sequence $\{\delta_\rho\}$ of positive numbers which satisfies the following conditions:
\begin{itemize}
\item $\phi_\rho$ decreases to $\phi$ as $\rho\to 0$,
\item $\delta_\rho$ decreases to $0$ as $\rho\to 0$,
\item $\phi_\rho$ is smooth away from the closed analytic subset $E_\rho(\sqrt{-1}\partial\xoverline{\partial}\phi)$,
\item $\sqrt{-1}\partial\xoverline{\partial}\phi_\rho\geq -\delta_\rho\omega$.
\end{itemize}
\end{lemma}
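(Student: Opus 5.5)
The plan is to reduce \Cref{regularization} directly to \Cref{approx} by writing the curvature current of $e^{-\phi}$ in the form required there.

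First fix a smooth hermitian metric $e^{-\phi_0}$ on $F$, where $\phi_0$ denotes the collection of smooth local weights. Set $\tilde{T}=\sqrt{-1}\partial\xoverline{\partial}\phi_0$; this is a smooth closed real $(1,1)$-form, namely the curvature form of the smooth metric. Next let $\sigma=\phi-\phi_0$. In any trivialization the transition terms $\log\abs{g_{\alpha\beta}}^2$ cancel in this difference, so $\sigma$ is a globally defined function on $X$. Locally $\sigma$ is the sum of a plurisubharmonic function and a smooth function. Hence $\sigma$ is quasi-plurisubharmonic: $\sqrt{-1}\partial\xoverline{\partial}\sigma\geq-\tilde{T}\geq -C\omega$ by compactness. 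Then
\[
T=\tilde{T}+\sqrt{-1}\partial\xoverline{\partial}\sigma=\sqrt{-1}\partial\xoverline{\partial}\phi\geq 0 ,
\]
so we may take $\gamma=0$, which is a continuous real $(1,1)$-form.

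Applying \Cref{approx} gives quasi-plurisubharmonic functions $\sigma_\rho$ and positive numbers $\delta_\rho$ with the following properties: $\sigma_\rho$ is smooth on $X\setminus E_\rho(T)$, $\sigma_\rho$ converges to $\sigma$ monotonically as $\rho\to0$, $\delta_\rho$ decreases to $0$, and $\tilde{T}+\sqrt{-1}\partial\xoverline{\partial}\sigma_\rho\geq-\delta_\rho\omega$. Define $\phi_\rho=\phi_0+\sigma_\rho$ in each trivialization. These weights transform exactly as $\phi_0$ does, so they define singular hermitian metrics $e^{-\phi_\rho}$ on $F$. Their curvature is $\sqrt{-1}\partial\xoverline{\partial}\phi_\rho=T_\rho\geq-\delta_\rho\omega$. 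The weight $\phi_\rho$ is smooth wherever $\sigma_\rho$ is, that is, away from $E_\rho(\sqrt{-1}\partial\xoverline{\partial}\phi)$, and this set is a closed analytic subset by \cite{Siu74}. Finally, $\phi_\rho-\phi=\sigma_\rho-\sigma$, so the monotone convergence of $\sigma_\rho$ to $\sigma$ translates directly into $\phi_\rho$ decreasing to $\phi$.

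The only delicate point is this monotonicity statement, which is why I expect it to be the one step needing care. \Cref{approx} says $\sigma_\rho$ is increasing in $\rho$ and converges to $\sigma$, and this means exactly that $\sigma_\rho$ decreases to $\sigma$ as $\rho\downarrow0$. The convergence holds pointwise everywhere, including on the polar set $\{\phi=-\infty\}$. This is the standard feature of Demailly's construction via Kiselman–Legendre transforms of regularized maxima, and I will simply quote it from \cite{Dem94}.

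If a sequence rather than a family is wanted, I restrict to $\rho=1/m$. None of the listed properties is affected by this restriction.
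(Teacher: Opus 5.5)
Your proposal is correct and follows essentially the same route as the paper. You fix a smooth reference metric on $F$, apply \Cref{approx} to the global quasi-plurisubharmonic function $\sigma=\phi-\iota$ (with $\tilde T$ the smooth curvature form and $\gamma=0$, both of which the paper leaves implicit), and set $\phi_\rho=\sigma_\rho+\iota$.
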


\begin{proof}
Choose a smooth hermitian metric $e^{-\iota}$ of $F$, and let $\sigma= \phi- \iota$. Then $\sigma$ is a global quasi-plurisubharmonic function on $X$. By \Cref{approx}, there exist a sequence $\{\sigma_\rho\}$ of quasi-plurisubharmonic functions and a sequence $\{\delta_\rho\}$ of positive numbers which satisfy the following conditions:
\begin{itemize}
\item $\sigma_\rho$ decreases to $\sigma$ as $\rho\to 0$,
\item $\delta_\rho$ decreases to $0$ as $\rho\to 0$,
\item $\sigma_\rho$ is smooth away from $E_\rho(\sqrt{-1} \partial\xoverline{\partial}\sigma)=E_\rho(\sqrt{-1}\partial\xoverline{\partial}\phi)$, which is always a proper closed analytic subset by \cite{Siu74},
\item $\sqrt{-1}\partial\xoverline{\partial}(\sigma_\rho+\iota)\geq -\delta_\rho\omega$.
\end{itemize}
Letting $\phi_\rho=\sigma_\rho+\iota$, we get the desired sequence of singular hermitian metrics.
\end{proof}

\vspace{0.5cm}

\section{Bochner-Kodaira-Nakano type inequalities}

\subsection{Classical Bochner-Kodaira-Nakano inequality for $\xoverline{\partial}^*$}
\indent Let $(\Omega,\omega)$ be a relatively compact Kähler domain in a complex manifold. We assume $\omega$ is a Kähler form defined on $\xoverline{\Omega}$. Let $F$ be a holomorphic line bundle with a singular hermitian metric $e^{-\varphi}$ such that $\varphi$ is smooth on $\xoverline{\Omega}$. We define $D_{(n,\,1)}(\Omega, F)$ to be the set of smooth $F$-valued $(n, 1)$-forms with compact support in $\Omega$. Then we have the Bochner-Kodaira-Nakano inequality
\vspace{0.2cm}
\begin{equation}
\norm{\xoverline{\partial} u}^2_\varphi+\norm{\xoverline{\partial}^*u}^2_\varphi\geq \int_\Omega 
([\sqrt{-1}\partial\xoverline{\partial}\varphi, \Lambda_\omega]u, u)_{\varphi}\,dV_\omega{,}
\label{classicBKN}
\end{equation}
where $\xoverline{\partial}^*$ is the formal adjoint operator of $\xoverline{\partial}$. The inequality above originates from Bochner's differential geometric technique, which was adapted to complex manifolds by Kodaira to prove his vanishing and embedding theorems (\textit{cf.} \cite{Kod53}). Nakano later generalized this to holomorphic vector bundles (\textit{cf.} \cite{Nak55}).

\subsection{Bochner-Kodaira-Nakano inequality for $\xoverline{\partial}^*_g$}\label{BKN} In this subsection, we introduce a slightly modified version of the Bochner-Kodaira-Nakano inequality. We need this kind of inequality to solve a $\xoverline{\partial}$-equation, which leads to the solution of our division problem. In this subsection, we follow the contents of \cite{Che16} with slight modifications. \\ \\ 
\indent Let $(\Omega,\omega)$ be a relatively compact Kähler domain in a complex manifold. We assume $\omega$ is a Kähler form defined on $\xoverline{\Omega}$. Let $F$ be a holomorphic line bundle with a singular hermitian metric $e^{-\varphi}$ such that $\varphi$ is smooth on $\xoverline{\Omega}$. Let 
\vspace{0.2cm}
\[ 
g=(g_1,\cdots,g_p)\in \mathcal{O}(\xoverline{\Omega})^{\oplus p}
\]
with $\abs{g}>0$ on $\xoverline{\Omega}$. With these settings, we define $D_{(n,\,k)}(\Omega, F)$ to be the set of smooth $F$-valued $(n, k)$-forms with compact support in $\Omega$, and $L^2_{(n,\,k)}(\Omega, F, \varphi)$ the completion of $D_{(n,\,k)}(\Omega, F)$ with respect to the inner product induced by $\omega$ and $e^{-\varphi}$. We further define
\vspace{0.2cm}
\[
D_k(\Omega, F)=\{u\in D_{(n,\,k)}(\Omega, F)^{\oplus p}: g\cdot u =0\}
\]
and
\vspace{0.2cm}
\[
S_k(\Omega, F,\varphi)=\{u\in L^2_{(n,\,k)}(\Omega, F, \varphi)^{\oplus p}: g\cdot u =0\}{.}
\]
For each $u=(u_1,\cdots,u_p)\in L^2_{(n,\,k)}(\Omega, F, \varphi)^{\oplus p}$, we define
\vspace{0.2cm}
\[
\norm{u}_\varphi=\sqrt{\sum_l \norm{u_l}^2_\varphi}{.}
\]
We can readily check that the completion of $D_k(\Omega, F)$ with respect to the norm $\norm{\boldsymbol{\cdot}}_\varphi$ is contained in $S_k(\Omega, F, \varphi)$. As $g$ is holomorphic, the $\xoverline{\partial}$ operator from $D_{(n,\,0)}(\Omega, F)^{\oplus p}$ to $D_{(n,\,1)}(\Omega, F)^{\oplus p}$ induces a new operator from $D_0(\Omega, F)$ to $D_1(\Omega, F)$, which is still denoted by the same symbol for the sake of simplicity. Let $\xoverline{\partial}^*$ (\textit{resp.} $\xoverline{\partial}^*_g$) denote the formal adjoint of
\vspace{0.2cm} 
\[
\xoverline{\partial}:D_{(n,\,k)}(\Omega, F)^{\oplus p}\to D_{(n,\,k+1)}(\Omega, F)^{\oplus p}\,\,\,(\textit{resp.}\,\xoverline{\partial}:D_k(\Omega, F)\to D_{k+1}(\Omega, F)) 
\]
with respect to the inner product $(\boldsymbol{\cdot}\,,\,\boldsymbol{\cdot})_\varphi$. \\ \\
Now we give a rather explicit formula for $\xoverline{\partial}^*_g$, which is due to Ohsawa (\textit{cf.} \cite{Ohs02a}, \cite[Theorem 4.1]{Che16}). The original version in the literature is slightly different.

\begin{lemma} \label{Ohss} For any $u\in D_1(\Omega, F)$, we have
\vspace{0.2cm}
\[
\xoverline{\partial}^*_gu=\xoverline{\partial}^* u-\xoverline{g}\cdot\sum^p_{l=1}\xoverline{\partial}(\xoverline {g_l}/\,\abs{g}^2)\iprod u_l{,}
\]
where $\iprod$ is the contraction operator and $\xoverline{g}$ (\textit{resp.} $\xoverline{g_l}$) is the complex conjugate of $g$ (\textit{resp.} $g_l$). 
\end{lemma}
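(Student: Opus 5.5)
The formula follows from identifying $\xoverline{\partial}^*_g$ as the compression of $\xoverline{\partial}^*$ by the pointwise orthogonal projection onto $\ker(g\cdot)$.

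On $\xoverline{\Omega}$ we have $\abs{g}>0$, so we can define pointwise on $F$-valued forms the projection
\[
P w = w - \frac{\xoverline{g}}{\abs{g}^2}\,(g\cdot w), \qquad w=(w_1,\cdots,w_p).
\]
Here $\xoverline{g}\,(g\cdot w)$ is the $p$-tuple whose $l$-th entry is $\xoverline{g_l}\sum_m g_m w_m$. This $P$ is the orthogonal projection, with respect to $(\boldsymbol{\cdot}\,,\,\boldsymbol{\cdot})_\varphi$, onto $\{g\cdot w=0\}$: it fixes such $w$, satisfies $g\cdot Pw=0$, and is self-adjoint pointwise. It has smooth coefficients, so it maps $D_{(n,\,k)}(\Omega,F)^{\oplus p}$ onto $D_k(\Omega,F)$.

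\textbf{Step 1 (characterize $\xoverline{\partial}^*_g$).} For $u\in D_1$ and any $w\in D_0$ we have
\[
(\xoverline{\partial}w,u)_\varphi=(w,\xoverline{\partial}^*u)_\varphi=(w,P\xoverline{\partial}^*u)_\varphi,
\]
because $w=Pw$ and $P$ is self-adjoint. Since $P\xoverline{\partial}^*u\in D_0$, this identifies $\xoverline{\partial}^*_gu=P\xoverline{\partial}^*u$, with uniqueness coming from density of $D_0$ in its own span.

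\textbf{Step 2 (compute $g\cdot\xoverline{\partial}^*u$).} This is the main computation. Write $\xoverline{\partial}^*=-\ast\partial_\varphi\ast$, or more conveniently use the local formula $\xoverline{\partial}^*u_l=-\sum_j e^{\varphi}\partial_j(e^{-\varphi}u_{l,\bar j})\,(\cdots)$ in the Kähler setting. The key point is that $\xoverline{\partial}^*$ is a first-order operator whose principal part is contraction with $\partial$. For $g$ holomorphic, $\xoverline{g_l}$ is antiholomorphic, so $\partial\xoverline{g_l}$ vanishes, while $\bar\partial$ of the conjugate contributes. The commutator identity I plan to use is
\[
\xoverline{\partial}^*(\phi\, u)=\phi\,\xoverline{\partial}^*u - \xoverline{\partial}\phi\ \text{-contraction, conjugated},
\]
more precisely $\xoverline{\partial}^*(\bar h u)=\bar h\,\xoverline{\partial}^*u-\xoverline{\partial}\bar h\iprod u$ for smooth $h$, up to the convention for $\iprod$ (interior product with the dual vector of $\xoverline{\partial}\bar h$ via $\omega$). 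Applying this with $h=g_l$ holomorphic gives $\xoverline{\partial}^*(g_l u_l)=g_l\xoverline{\partial}^*u_l$, since contraction against $\partial$-type data of a holomorphic function vanishes in the correct slot. Summing and using $\sum_l g_lu_l=0$ yields $g\cdot\xoverline{\partial}^*u=0$.

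Step 2 as written would give $P\xoverline{\partial}^*u=\xoverline{\partial}^*u$, which contradicts the claimed formula, so the correct bookkeeping must be done carefully here. Write $u_l=P$-type decomposition: since $g\cdot u=0$, we have $u_l=\sum_m(\delta_{lm}-\xoverline{g_l}g_m/\abs{g}^2)u_m$. Instead of assuming a vanishing commutator, compute $g\cdot\xoverline{\partial}^*u$ directly by writing $u_l=\abs{g}^2\cdot(\text{stuff})$ and using the identity $\sum_l g_l\,\xoverline{\partial}(\xoverline{g_l}/\abs{g}^2)\iprod u_l$. This gives
\[
g\cdot\xoverline{\partial}^*u=\abs{g}^2\sum_l\xoverline{\partial}(\xoverline{g_l}/\abs{g}^2)\iprod u_l,
\]
obtained by differentiating $\sum_l(\xoverline{g_l}/\abs{g}^2)\,g_l=1$ and using $\sum g_lu_l=0$. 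The obstacle is getting the signs and the adjoint/contraction conventions right, and this is what I will check with a local computation in coordinates where $\omega$ is Euclidean at a point.

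\textbf{Step 3 (conclude).} Substituting Step 2 into $P\xoverline{\partial}^*u=\xoverline{\partial}^*u-\frac{\xoverline{g}}{\abs{g}^2}\,(g\cdot\xoverline{\partial}^*u)$ produces exactly the stated expression for $\xoverline{\partial}^*_g u$.
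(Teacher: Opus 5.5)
Your framework is sound, and it is cleaner than the paper's. The pointwise projection $Pw = w - \frac{\bar g}{\abs{g}^2}(g\cdot w)$ is self-adjoint and maps onto $D_0$. Step 1 therefore correctly gives $\bar\partial^*_g u = P\bar\partial^* u$ without the Gram--Schmidt basis of $S_0^\perp$ that the paper uses, and Step 3 is also correct.

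The gap is Step 2. It is the only place where the correction term is produced, and it is not actually carried out. The commutator identity you state is wrong. Dualizing $\bar\partial(\bar f w) = \bar f\,\bar\partial w + \bar\partial\bar f\wedge w$ gives $\bar\partial^*(fu) = f\,\bar\partial^* u - \bar\partial\bar f\iprod u$, where $\beta\iprod$ is the pointwise adjoint of $\beta\wedge$. For $f=g_l$ the correction term is $\bar\partial\bar g_l\iprod u_l$, and $\bar\partial\bar g_l=\overline{\partial g_l}$ does not vanish. Holomorphy only kills $\bar\partial g_l$, which never appears. This is why you reached the contradictory conclusion $g\cdot\bar\partial^*u=0$.

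You then assert the correct formula $g\cdot\bar\partial^* u = \abs{g}^2\sum_l\bar\partial(\bar g_l/\abs{g}^2)\iprod u_l$, but the justification offered does not yield it. Differentiating $\sum_l(\bar g_l/\abs{g}^2)g_l=1$ gives $\sum_l g_l\,\bar\partial(\bar g_l/\abs{g}^2)=0$. That is a relation among the forms weighted by $g_l$, and it says nothing about $\sum_l \bar\partial(\bar g_l/\abs{g}^2)\iprod u_l$.

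The missing argument has two parts.

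First, sum the corrected Leibniz rule over $l$ and use $\bar\partial^*(g\cdot u)=0$. This gives $g\cdot\bar\partial^*u=\sum_l\bar\partial\bar g_l\iprod u_l$.

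Second, expand $\bar\partial(\bar g_l/\abs{g}^2)=\abs{g}^{-2}\bar\partial\bar g_l+\bar g_l\,\gamma$ with $\gamma=\bar\partial(\abs{g}^{-2})$. Since $\beta\iprod$ is conjugate-linear in $\beta$, you have $(\bar g_l\gamma)\iprod u_l=g_l(\gamma\iprod u_l)$. So the second term contributes $\gamma\iprod(g\cdot u)=0$, which is exactly your Step 2 formula.

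Equivalently, as the paper does, pair $g\cdot\bar\partial^*u$ with a test form $\chi$ and move $\bar\partial$ onto $\bar g\chi=(\bar g/\abs{g}^2)(\abs{g}^2\chi)$. The term in which $\bar\partial$ falls on $\abs{g}^2\chi$ is killed by $g\cdot u=0$. With either version inserted your proof is complete; without it, the step that distinguishes $\bar\partial^*_g$ from $\bar\partial^*$ is only asserted.
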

\begin{proof}
The orthogonal complement of $S_0(\Omega, F, \varphi)$ in $L^2_{(n,\,0)}(\Omega, F, \varphi)^{\oplus p}$ is
\vspace{0.2cm}
\[
S_0(\Omega, F, \varphi)^\perp=\xoverline{g}\cdot L^2_{(n,\,0)}(\Omega, F, \varphi){.}
\] 
Since $L^2_{(n,\,0)}(\Omega, F, \varphi)$ is a separable Hilbert space and $D_{(n,\,0)}(\Omega, F)$ is dense in $L^2_{(n,\,0)}(\Omega, F, \varphi)$, we may choose by the Gram-Schmidt method a complete orthonormal basis $\{e_j\}\subset\xoverline{g}\cdot D_{(n,\,0)}(\Omega, F)$ of $S_0(\Omega, F, \varphi)^\perp$. Let $P(u)$ be the projection of $\xoverline{\partial}^* u$ to $S_0(\Omega, F, \varphi)$, namely
\vspace{0.2cm}
\[
P(u)=\xoverline{\partial}^* u-\sum_j (\xoverline{\partial}^* u,e_j)_\varphi e_j{.}
\]
Put $e_j=\chi_j\xoverline{g}/\,\abs{g}$. Clearly $\chi_j\in D_{(n,\,0)}(\Omega, F)$ and $\{\chi_j\}$ forms a complete orthonormal basis of $L^2_{(n,\,0)}(\Omega, F, \varphi)$. Since $u\in D_1(\Omega, F)$ and
\vspace{0.2cm}
\[
\xoverline{\partial}e_j=\xoverline{\partial}(\abs{g}\chi_j)\cdot(\xoverline{g}/\,\abs{g}^2)+\abs{g}\chi_j\cdot\xoverline{\partial}(\xoverline{g}/\,\abs{g}^2){,}
\]
it follows that
\vspace{0.2cm}
\[
(\xoverline{\partial}^* u,e_j)_\varphi=(u,\xoverline{\partial}e_j)_\varphi=(u,\abs{g}\chi_j\cdot \xoverline{\partial}(\xoverline{g}/\,\abs{g}^2))_\varphi=\sum^p_{l=1}(\abs{g}\cdot\xoverline{\partial}(\xoverline{g_l}/\,\abs{g}^2)\iprod u_l,\chi_j)_\varphi{.}
\]
Thus
\vspace{0.2cm}
\[
P(u)=\xoverline{\partial}^* u-\sum_j\sum_l(\abs{g}\xoverline{\partial}(\xoverline{g_l}/\,\abs{g}^2)\iprod u_l,\chi_j)_\varphi e_j=\xoverline{\partial}^* u-\xoverline{g}\cdot\sum^p_{l=1}\xoverline{\partial}(\xoverline{g_l}/\,\abs{g}^2)\iprod u_l{,}
\]
which clearly lies in $D_0(\Omega, F)$. Since $(\xoverline{\partial}^* u, w)=(P(u),w)$ for any $w\in D_0(\Omega, F)$, we conclude $\xoverline{\partial}^*_gu=P(u)$.
\end{proof}

\indent Put 
\vspace{0.2cm}
\[
\Phi_g(u)=\xoverline{g}\cdot\sum^p_{l=1}\xoverline{\partial}(\xoverline {g_l}/\,\abs{g}^2)\iprod u_l{.}
\]
By the Cauchy-Schwarz inequality and the Bochner-Kodaira-Nakano inequality (\textit{cf.} \eqref{classicBKN}), for all $u\in D_1(\Omega, F)$ and $\gamma>1$, we have
\vspace{0.2cm}
\[
\frac{\gamma}{\gamma-1}\norm{\xoverline{\partial}^*_gu}^2_\varphi +\norm{\xoverline{\partial}u}^2_\varphi\geq \sum^p_{l=1}\int_\Omega ([\sqrt{-1}\partial\xoverline{\partial}\varphi, \Lambda_\omega]u_l, u_l)_\varphi\,dV_\omega -\gamma\norm{\Phi_g(u)}^2_\varphi{.}
\]
Now assume there is a number $\gamma>1$ such that the right hand side of the previous inequality is no less than
\vspace{0.2cm}
\[
\frac{\gamma}{\gamma-1}\int_\Omega ([\sqrt{-1}\partial\xoverline{\partial}\Psi, \Lambda_\omega]u, u)_\varphi\,dV_\omega= \frac{\gamma}{\gamma-1}  \sum^p_{l=1} \,\int_\Omega ([\sqrt{-1}\partial\xoverline{\partial}\Psi, \Lambda_\omega]u_l, u_l)_\varphi\,dV_\omega{,}
\]
where $\Psi$ is a smooth quasi-plurisubharmonic function on $\xoverline{\Omega}$ such that $\sqrt{-1}\partial\xoverline{\partial}\Psi+t\omega$ is positive on $\xoverline{\Omega}$ for some small $t>0$.
It follows that
\begin{equation}
\begin{split}
& \norm{\xoverline{\partial}^*_gu}^2_\varphi +\norm{\xoverline{\partial}u}^2_\varphi+t\norm{u}^2_\varphi \\
& \geq \int_\Omega ([\sqrt{-1}\partial\xoverline{\partial}\Psi+t\omega, \Lambda_\omega]u, u)_\varphi\,dV_\omega =\sum_l \,\int_\Omega ([\sqrt{-1}\partial\xoverline{\partial}\Psi+t\omega, \Lambda_\omega]u_l, u_l)_\varphi\,dV_\omega
\end{split}
\label{modifiedBoch}
\end{equation}
for all $u\in D_1(\Omega, F)$. The first inequality holds since 
\vspace{0.2cm}
\[
\int_\Omega ([\omega, \Lambda_\omega]u, u)_\varphi\,dV_\omega= \norm{u}^2_\varphi{.}
\]
Put $\square_g=\xoverline{\partial}\xoverline{\partial}^*_g+\xoverline{\partial}^*_g\xoverline{\partial}$. Then we have
\vspace{0.2cm}
\[
\norm{\xoverline{\partial}^*_gu}^2_\varphi +\norm{\xoverline{\partial}u}^2_\varphi+t\norm{u}^2_\varphi=\int_\Omega (\square_g u+tu,u)_\varphi\,dV_\omega
\]
for all $u\in D_1(\Omega, F)$. \\
\indent Let $\mathcal{H}=\mathcal{H}(\Omega, F, \varphi)$ (\textit{resp.} $\mathcal{H}_g=\mathcal{H}_g(\Omega, F, \varphi)$) be the completion of $D_1(\Omega, F)$ in $S_1(\Omega, F, \varphi)$ with respect to the norm $\norm{\boldsymbol{\cdot}}_\varphi$ (\textit{resp.} $(\norm{\xoverline{\partial}^*_g(\boldsymbol{\cdot})}^2_\varphi +\norm{\xoverline{\partial}(\boldsymbol{\cdot})}^2_\varphi+t\norm{\boldsymbol{\cdot}}^2_\varphi)^{1/\,2}$). Clearly $\mathcal{H}_g\subset\mathcal{H}$, and we may prove the following theorem.

\begin{theorem}\label{FA}
For any $v\in\mathcal{H}$, there is a unique weak solution $w\in\mathcal{H}_g$ of the equation $\square_g w+tw=v$ such that
\begin{flalign*}
&\max\big\{\int_\Omega ([\sqrt{-1}\partial\xoverline{\partial}\Psi+t\omega, \Lambda_\omega]w, w)_\varphi\,dV_\omega, \norm{\xoverline{\partial}^*_gw}^2_\varphi, \norm{\xoverline{\partial}w}^2_\varphi, t\norm{w}^2_\varphi\big\} \\
&\leq \int_\Omega ([\sqrt{-1}\partial\xoverline{\partial}\Psi+t\omega, \Lambda_\omega]^{-1}v, v)_\varphi\,dV_\omega= \norm{v}^2_{\varphi, \sqrt{-1}\partial\bar{\partial}\Psi +t\omega}{.}
\end{flalign*}
\end{theorem}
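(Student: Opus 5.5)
The plan is to use the Riesz representation theorem on the Hilbert space $\mathcal{H}_g$ and then read off the four estimates from the modified Bochner-Kodaira-Nakano inequality \eqref{modifiedBoch}.

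\textbf{Existence and uniqueness.} Equip $\mathcal{H}_g$ with the inner product
\[
\langle w,u\rangle_g=(\xoverline{\partial}^*_gw,\xoverline{\partial}^*_gu)_\varphi+(\xoverline{\partial}w,\xoverline{\partial}u)_\varphi+t(w,u)_\varphi ,
\]
with the operators extended by continuity from $D_1(\Omega,F)$. Since $t>0$, the completion embeds injectively into $\mathcal{H}$: a Cauchy sequence in $\norm{\cdot}_g$ is Cauchy in $\norm{\cdot}_\varphi$, and closedness of $\xoverline{\partial}$ and $\xoverline{\partial}^*_g$ in the weak sense identifies the limits. Set $\Theta=[\sqrt{-1}\partial\xoverline{\partial}\Psi+t\omega,\Lambda_\omega]$, which is a positive definite pointwise operator on $(n,1)$-forms by the assumption on $\Psi$. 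For $v\in\mathcal{H}$, consider the functional $\ell(u)=(v,u)_\varphi$ on $D_1(\Omega,F)$. By pointwise Cauchy-Schwarz for $\Theta$,
\[
\abs{(v,u)_\varphi}^2\le\int_\Omega(\Theta^{-1}v,v)_\varphi\,dV_\omega\cdot\int_\Omega(\Theta u,u)_\varphi\,dV_\omega\le\norm{v}^2_{\varphi,\sqrt{-1}\partial\bar\partial\Psi+t\omega}\,\langle u,u\rangle_g ,
\]
where the second inequality is \eqref{modifiedBoch}. So $\ell$ extends to a bounded functional on $\mathcal{H}_g$ with norm at most $\norm{v}_{\varphi,\sqrt{-1}\partial\bar\partial\Psi+t\omega}$. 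This is finite: $\Theta\ge t$ pointwise for $(n,1)$-forms, so $\Theta^{-1}\le t^{-1}$ and the norm is bounded by $t^{-1/2}\norm{v}_\varphi$. Riesz representation then gives a unique $w\in\mathcal{H}_g$ with $\langle w,u\rangle_g=(v,u)_\varphi$ for all $u\in\mathcal{H}_g$, in particular for all $u\in D_1$. This is precisely the weak formulation of $\square_gw+tw=v$. Uniqueness follows because a weak solution with $v=0$ satisfies $\langle w,w\rangle_g=0$.

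\textbf{Estimates.} Take $u=w$ in the identity. Then
\[
\norm{w}_g^2=(v,w)_\varphi\le\norm{v}_{\varphi,\Theta}\Bigl(\int(\Theta w,w)\Bigr)^{1/2}.
\]
Applying \eqref{modifiedBoch} to $w$, which requires extending it from $D_1$ to $\mathcal{H}_g$, gives $\int(\Theta w,w)\le\norm{w}_g^2$. Hence $\int(\Theta w,w)\le\norm{v}_{\varphi,\Theta}\,(\int(\Theta w,w))^{1/2}$, so $\int(\Theta w,w)\le\norm{v}^2_{\varphi,\Theta}$. Consequently $\norm{w}_g^2\le\norm{v}^2_{\varphi,\Theta}$, and each of $\norm{\xoverline{\partial}^*_gw}^2$, $\norm{\xoverline{\partial}w}^2$ and $t\norm{w}^2$ is bounded by $\norm{w}_g^2$. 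This yields the stated maximum bound.

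\textbf{Main obstacle.} The delicate step is extending \eqref{modifiedBoch} from $D_1$ to $\mathcal{H}_g$. I would handle it by choosing $u_j\in D_1$ with $u_j\to w$ in $\norm{\cdot}_g$. Then $u_j\to w$ in $L^2$, and since $\Theta$ is bounded on $\xoverline{\Omega}$ (by smoothness of $\Psi$ up to the boundary), $\int(\Theta u_j,u_j)\to\int(\Theta w,w)$. Passing to the limit in \eqref{modifiedBoch} gives the extended inequality. One must also check that $\mathcal{H}_g\to\mathcal{H}$ is injective, i.e.\ that the abstract completion is realized inside $S_1$. This holds by the same closedness argument: if $u_j\to0$ in $L^2$ while $\xoverline{\partial}u_j$ and $\xoverline{\partial}^*_gu_j$ converge in $L^2$, then pairing with test forms shows their limits vanish.
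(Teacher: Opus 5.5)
Your proposal is correct and follows the paper's proof. Both apply Riesz representation on $\mathcal{H}_g$ to the functional $x\mapsto(x,v)_\varphi$, bound it by pointwise Cauchy--Schwarz against $[\sqrt{-1}\partial\xoverline{\partial}\Psi+t\omega,\Lambda_\omega]$ combined with \eqref{modifiedBoch}, and you additionally make explicit the density extension of \eqref{modifiedBoch} to $\mathcal{H}_g$ (and the injectivity $\mathcal{H}_g\hookrightarrow\mathcal{H}$), which the paper takes for granted.

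One slip needs correcting. On $(n,1)$-forms that operator equals $[\sqrt{-1}\partial\xoverline{\partial}\Psi,\Lambda_\omega]+t$, which is $\geq t$ only if $\Psi$ is plurisubharmonic. That fails in the paper's application, where $\sqrt{-1}\partial\xoverline{\partial}\phi_\rho\geq -N\delta_\rho\omega$. Finiteness of $\norm{v}_{\varphi,\sqrt{-1}\partial\bar{\partial}\Psi+t\omega}$ still holds, because $\sqrt{-1}\partial\xoverline{\partial}\Psi+t\omega$ is positive on the compact set $\xoverline{\Omega}$ and so has a uniform lower bound there. Existence never needed that finiteness anyway, since $\abs{(x,v)_\varphi}\le t^{-1/2}\norm{v}_\varphi\norm{x}_g$.
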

Recall that $t>0$ is chosen so that $\sqrt{-1}\partial\xoverline{\partial}\Psi+t\omega>0$. The term $\norm{v}^2_{\varphi, \sqrt{-1}\partial\bar{\partial}\Psi +t\omega}$ means the square of the norm of $v$ with respect to $\sqrt{-1}\partial\xoverline{\partial}\Psi+t\omega$ and $\varphi$.
\begin{proof}
Define the linear functional 
\vspace{0.2cm}
\[
F(x)=\int_\Omega (x, v)_\varphi\,dV_\omega
\]
for all $x\in\mathcal{H}_g$. Then by the Cauchy-Schwarz inequality,
\vspace{0.2cm}
\begin{flalign*}
&\abs{F(x)}^2\leq \Bigl(\int_\Omega ([\sqrt{-1}\partial\xoverline{\partial}\Psi+t\omega, \Lambda_\omega]x, x)_\varphi\,dV_\omega\Bigr)\Bigl(\int_\Omega ([\sqrt{-1}\partial\xoverline{\partial}\Psi+t\omega, \Lambda_\omega]^{-1}v, v)_\varphi\,dV_\omega\Bigr) \\
&\leq (\norm{v}^2_{\varphi, \sqrt{-1}\partial\bar{\partial}\Psi +t\omega})(\norm{\xoverline{\partial}^*_gx}^2_\varphi +\norm{\xoverline{\partial}x}^2_\varphi+t\norm{x}^2_\varphi){,} \\
\end{flalign*}
and there exists a unique $w\in\mathcal{H}_g$ such that 
\vspace{0.2cm}
\[
\int_\Omega (\xoverline{\partial}^*_gx, \xoverline{\partial}^*_gw)_\varphi\,dV_\omega +\int_\Omega(\xoverline{\partial}x, \xoverline{\partial}w)_\varphi\,dV_\omega+t\int_\Omega (x, w)_\varphi\,dV_\omega=\int_\Omega (x, v)_\varphi\,dV_\omega
\]
by the Riesz representation theorem. The Riesz representation theorem also implies that $\norm{\xoverline{\partial}^*_gw}^2_\varphi +\norm{\xoverline{\partial}w}^2_\varphi+t\norm{w}^2_\varphi\leq\norm{v}^2_{\varphi, \sqrt{-1}\partial\bar{\partial}\Psi +t\omega}$, which means 
\vspace{0.2cm}
\[
\int_\Omega ([\sqrt{-1}\partial\xoverline{\partial}\Psi+t\omega, \Lambda_\omega]w, w)_\varphi\,dV_\omega\leq\norm{v}^2_{\varphi, \sqrt{-1}\partial\bar{\partial}\Psi +t\omega}{.}
\]
\end{proof}

Before moving on to the next section, we introduce several lemmas that are helpful for handling the Bochner-Kodaira-Nakano inequalities more effectively.

\begin{lemma}[\cite{Sko72}] \label{Skoda}
Let $g=(g_1,\cdots,g_p)$ be holomorphic functions on a domain $U\subset\mathbb{C}^n$, and let $q=\min\{n,p-1\}$. Then for any matrix $\varrho$ of size $p\times n$, in Einstein summation convention for $\nu$ and $\mu$,
\setlength{\parskip}{0.5\baselineskip}
\[
q\sum^p_{l=1}(\varrho^\nu_l\xoverline{\varrho^\mu_l}\partial_\nu\partial_{\bar{\mu}}\log\,\abs{g}^2)\geq\abs{g}^2 \Bigl| \sum\nolimits^p_{l=1}\varrho^\nu_l\partial_\nu(g_l/\,\abs{g}^2) \Bigr|^2{.}
\]
\end{lemma}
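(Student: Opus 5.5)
The plan is to prove the inequality pointwise. Fix $z\in U$ with $g(z)\neq 0$; this is the only place where both sides make sense. Then reduce to a normal form by a unitary change of the $g$'s, after which the inequality becomes a linear-algebra estimate on a $(p-1)\times(p-1)$ matrix.

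\emph{Step 1: unitary invariance.} Write $\varrho_l=(\varrho^1_l,\dots,\varrho^n_l)$ for the $l$-th row of $\varrho$. Then the left-hand side is $q\sum_l H(\varrho_l,\varrho_l)$, where $H$ is the hermitian form $\partial\xoverline{\partial}\log\,\abs{g}^2$ at $z$. The right-hand side is $\abs{g}^2\,\bigl|\sum_l \varrho_l\cdot\partial(g_l/\abs{g}^2)\bigr|^2$.

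Let $U\in U(p)$ be constant and set $\tilde g=Ug$, so $g=U^*\tilde g$ and $\abs{\tilde g}=\abs{g}$. Define new rows $\tilde\varrho_k=\sum_l \varrho_l\,(U^*)_{lk}$. Then
\[
\sum_l\varrho_l\cdot\partial(g_l/\abs{g}^2)=\sum_k\tilde\varrho_k\cdot\partial(\tilde g_k/\abs{\tilde g}^2).
\]
Also $\log\,\abs{g}^2=\log\,\abs{\tilde g}^2$, and $\sum_l H(\varrho_l,\varrho_l)=\sum_k H(\tilde\varrho_k,\tilde\varrho_k)$ because $U^*$ is unitary. So both sides are invariant, and we may assume $g(z)=(g_1(z),0,\dots,0)$ with $g_1(z)\neq 0$.

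\emph{Step 2: computation at $z$.} In general
\[
\partial\xoverline{\partial}\log\,\abs{g}^2=\frac{\abs{g}^2\sum_m\sqrt{-1}\,\partial g_m\wedge\xoverline{\partial g_m}-\sqrt{-1}\,\bigl(\sum_m\xoverline{g_m}\partial g_m\bigr)\wedge\xoverline{\bigl(\sum_m\xoverline{g_m}\partial g_m\bigr)}}{\abs{g}^4}
\]
(up to the factor $\sqrt{-1}$). At $z$, where $g_m(z)=0$ for $m\ge 2$, this gives $H(\zeta,\zeta)=\sum_{m\ge2}\abs{\zeta\cdot\partial g_m}^2/\abs{g_1}^2$.

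Next,
\[
\partial(g_l/\abs{g}^2)=\frac{\partial g_l}{\abs{g}^2}-\frac{g_l\sum_m\xoverline{g_m}\,\partial g_m}{\abs{g}^4}.
\]
At $z$ this equals $\partial g_l/\abs{g_1}^2$ for $l\ge2$. For $l=1$ it equals $\partial g_1/\abs{g_1}^2-\abs{g_1}^2\partial g_1/\abs{g_1}^4=0$.

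Put $a_{lm}=\varrho_l\cdot\partial g_m(z)$ for $l,m\ge 2$. Then the right-hand side is $\bigl|\sum_{l\ge2}a_{ll}\bigr|^2/\abs{g_1}^2$. The left-hand side is $q\sum_{l\ge1}\sum_{m\ge2}\abs{\varrho_l\cdot\partial g_m}^2/\abs{g_1}^2$, which is at least $q\sum_{l,m\ge2}\abs{a_{lm}}^2/\abs{g_1}^2$.

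\emph{Step 3: the matrix inequality.} It remains to show $\abs{\operatorname{tr}A}^2\le q\,\norm{A}_{HS}^2$ for $A=(a_{lm})_{l,m\ge2}$. This is the only non-routine point.

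The matrix $A$ is the product of the $(p-1)\times n$ matrix with rows $\varrho_l$ ($l\ge2$) and the $n\times(p-1)$ matrix with columns $\partial g_m(z)$. Hence $r:=\operatorname{rank}A\le\min\{n,p-1\}=q$.

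By Schur triangularization, $\operatorname{tr}A=\sum\lambda_i$, where at most $r$ of the eigenvalues $\lambda_i$ are nonzero. By Schur's inequality, $\sum\abs{\lambda_i}^2\le\norm{A}_{HS}^2$. Cauchy–Schwarz over the at most $r$ nonzero eigenvalues then gives
\[
\abs{\operatorname{tr}A}^2\le r\sum\abs{\lambda_i}^2\le q\,\norm{A}_{HS}^2.
\]
This completes the estimate. Note that the crude bound using the factor $p-1$ alone would not give the factor $n$, so the rank bound is essential.
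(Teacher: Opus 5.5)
Your proof is correct and complete. The paper gives no argument for this lemma of its own; it only cites \cite{Sko72}. So there is no in-paper route to compare against, and what you wrote is the standard pointwise proof of Skoda's inequality.

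The constant unitary change $\tilde g=Ug$, $\tilde\varrho=\xoverline{U}\varrho$ leaves both the hermitian sum $\sum_l H(\varrho_l,\varrho_l)$ and the linear term $\sum_l\varrho_l\cdot\partial(g_l/\abs{g}^2)$ unchanged. At a point with $g(z)=(g_1(z),0,\dots,0)$, your formulas $H(\zeta,\zeta)=\sum_{m\ge2}\abs{\zeta\cdot\partial g_m}^2/\abs{g_1}^2$ and $\partial(g_1/\abs{g}^2)(z)=0$ are right. The statement then reduces to $\abs{\operatorname{tr}A}^2\le q\norm{A}_{HS}^2$ for $A=RD$, which has rank at most $\min\{n,p-1\}=q$.

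Two small points on Step 3.

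\emph{Multiplicity.} ``At most $r$ nonzero eigenvalues'' should be counted with algebraic multiplicity. This holds because the algebraic multiplicity of $0$ is at least $\dim\ker A=(p-1)-r$; it is worth saying explicitly.

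\emph{Shortcut.} You can avoid Schur triangularization. Let $P$ be the orthogonal projection onto $\operatorname{Im}A$. Then $A=PA$, so $\operatorname{tr}A=\operatorname{tr}(PA)$, and Cauchy--Schwarz for the Hilbert--Schmidt pairing gives $\abs{\operatorname{tr}A}^2\le\norm{P}_{HS}^2\norm{A}_{HS}^2=r\norm{A}_{HS}^2$.

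Restricting to points with $g(z)\neq0$ matches how the lemma is used later, namely on $\{g_1\neq0\}$ in \Cref{726}. The case $p=1$ is covered trivially, since then $\partial(g_1/\abs{g_1}^2)=\partial(1/\xoverline{g_1})=0$.
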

For the proof, we refer to \cite{Sko72}. With \Cref{Skoda}, we prove a lemma which is used in the proof of the main theorem.

\begin{lemma}\label{726}
Let $g$, $U$, and $q$ be as in \Cref{Skoda}. Then for every positive definite hermitian form $\omega$ and $\epsilon> 0$, we have
\[
\abs{\xoverline{\partial}(\xoverline{g}/\, \abs{g}^2)}^2_{\sqrt{-1}\partial\bar{\partial}\log\,\abs{g}^2+ \epsilon\omega}\leq \frac{q}{\abs{g}^2}
\]
on $\{g_1\neq 0\}$ (or we may pick another $g_l$ other than $g_1$).
\end{lemma}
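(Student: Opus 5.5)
The plan is to reduce \Cref{726} to the pointwise dual-norm description and then apply \Cref{Skoda}. Fix a point of $U\cap\{g_1\neq0\}$, where $\abs{g}>0$, and write $\Theta=\sqrt{-1}\partial\xoverline{\partial}\log\abs{g}^2+\epsilon\omega$. Since $\sqrt{-1}\partial\xoverline{\partial}\log\abs{g}^2\geq 0$, the form $\Theta$ is positive definite, so the norm $\abs{\cdot}_\Theta$ of the $p$-tuple of $(0,1)$-forms $\beta_l=\xoverline{\partial}(\xoverline{g_l}/\abs{g}^2)$ is well defined. I will use the dual characterization
\[
\abs{\beta}^2_\Theta=\sup_{\varrho}\frac{\bigl|\sum_l \varrho^{\bar\mu}_l\,\beta_{l,\bar\mu}\bigr|^2}{\sum_l\Theta_{\nu\bar\mu}\varrho^\nu_l\xoverline{\varrho^\mu_l}}.
\]
This is the standard Cauchy--Schwarz duality between a hermitian form and its inverse. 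Here the supremum runs over nonzero $p\times n$ matrices $\varrho$, whose rows are tangent vectors of type $(1,0)$, with $\varrho^{\bar\mu}_l:=\xoverline{\varrho^\mu_l}$. Summing over $l$ is harmless: the norm of a $p$-tuple is the sum of the norms of the entries, and the dual of a direct sum of identical hermitian forms is the sum of the duals. So the statement reduces to showing, for every $\varrho$,
\[
\abs{g}^2\Bigl|\sum_l\xoverline{\varrho^\mu_l}\,\partial_{\bar\mu}(\xoverline{g_l}/\abs{g}^2)\Bigr|^2\leq q\sum_l\Theta_{\nu\bar\mu}\varrho^\nu_l\xoverline{\varrho^\mu_l}.
\]

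The key step is conjugation. We have $\sum_l\xoverline{\varrho^\mu_l}\partial_{\bar\mu}(\xoverline{g_l}/\abs{g}^2)=\xoverline{\sum_l\varrho^\mu_l\partial_\mu(g_l/\abs{g}^2)}$, so the left side equals $\abs{g}^2\bigl|\sum_l\varrho^\nu_l\partial_\nu(g_l/\abs{g}^2)\bigr|^2$. By \Cref{Skoda} this is at most $q\sum_l\varrho^\nu_l\xoverline{\varrho^\mu_l}\partial_\nu\partial_{\bar\mu}\log\abs{g}^2$. That quantity is in turn at most $q\sum_l\Theta(\varrho_l,\xoverline{\varrho_l})$, because the added term $q\epsilon\sum_l\omega(\varrho_l,\xoverline{\varrho_l})$ is nonnegative. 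Dividing by $\abs{g}^2$ and taking the supremum over $\varrho$ gives $\abs{\beta}^2_\Theta\leq q/\abs{g}^2$.

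The main obstacle is bookkeeping rather than analysis: fixing normalization conventions so that the $\Theta$-norm of the $(0,1)$-form matches the dual of $\sum_l\Theta(\varrho_l,\xoverline{\varrho_l})$ with no stray factor $2$ or $\sqrt{-1}$. I will settle this by working in coordinates where $\Theta$ is the identity at the point and checking directly. The role of the restriction to $\{g_1\neq0\}$ is only that there $\abs{g}>0$ and $E$ may be trivialized, so every expression is smooth.
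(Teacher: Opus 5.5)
Your proposal is correct and is essentially the paper's argument. The paper plugs into \Cref{Skoda} the explicit extremal matrix $\varrho^\mu_l=\sum_\lambda H_\epsilon^{\bar\lambda\mu}\overline{\eta_{l,\lambda}}$, where $\eta_{l,\mu}=\partial_\mu(g_l/\abs{g}^2)$ and $H_\epsilon=H+\epsilon\omega$ with $H$ the Hessian of $\log\abs{g}^2$, so that $\sum_{l,\mu}\varrho^\mu_l\eta_{l,\mu}$ equals the squared norm $S_\epsilon$, and then uses $H\le H_\epsilon$ to get $\abs{g}^2S_\epsilon^2\le qS_\epsilon$. Your supremum over all $\varrho$ is the same Cauchy--Schwarz duality with the maximizer left implicit, which also avoids dividing by $S_\epsilon$.
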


\begin{proof}
Put 
\vspace{0.2cm}
\[
H_{\mu\bar{\nu}}= \frac{\partial^2 \log\,\abs{g}^2}{\partial z_\mu \partial\bar{z}_\nu}{.}
\]
Since $\log\,\abs{g}^2$ is plurisubharmonic, the hermitian matrix $H= (H_{\mu\bar{\nu}})$ is positive semi-definite. \\
\indent Now set
\vspace{0.2cm}
\[
H_{\epsilon,\, \mu\bar{\nu}}= H_{\mu\bar{\nu}}+ \epsilon\omega_{\mu\bar{\nu}}{.}
\]
Let $H^{\bar{\nu}\mu}_\epsilon$ denote the inverse matrix of $H_\epsilon$. Define
\vspace{0.2cm}
\[
\eta_{l,\,\mu}=\frac{\partial}{\partial z_\mu}\left(\frac{g_l}{\abs{g}^2}\right)
\]
and
\vspace{0.2cm}
\[
\varrho^\mu_l= \sum_\lambda H^{\bar{\lambda}\mu}_\epsilon \xoverline{\eta_{l,\,\lambda}}{.}
\]
Also put
\vspace{0.2cm}
\[
S_\epsilon= \sum_{l,\,\mu,\,\nu} H^{\bar{\nu}\mu}_\epsilon \eta_{l,\,\mu}\xoverline{\eta_{l,\,\nu}}= \sum_{l,\,\mu,\,\nu}H_{\epsilon,\,\mu\bar{\nu}}\varrho^\mu_l\xoverline{\varrho^\nu_l}= \abs{\xoverline{\partial}(\xoverline{g}/\, \abs{g}^2)}^2_{\sqrt{-1}\partial\bar{\partial}\log\,\abs{g}^2+ \epsilon\omega}{.}
\]
Applying \Cref{Skoda} to our choice of $\varrho$ and $\eta$ and using the fact that $H\leq H_\epsilon$, we obtain
\vspace{0.2cm}
\[
\abs{g}^2S^2_\epsilon= \abs{g}^2\abs{\sum_{l,\,\mu}\varrho^\mu_l \eta_{l,\,\mu}}^2\leq q\sum_{l,\,\mu,\,\nu}H_{\mu\bar{\nu}}\varrho^\mu_l\xoverline{\varrho^\nu_l}\leq q\sum_{l,\,\mu,\,\nu}H_{\epsilon,\,\mu\bar{\nu}}\varrho^\mu_l\xoverline{\varrho^\nu_l}= qS_\epsilon{,}
\]
which gives the desired inequality.
\end{proof}

\vspace{0.5cm}

\section{Proof of \Cref{MAIN THEOREM 1}}
We prove \Cref{MAIN THEOREM 1} in this section. Assume that $g= (g_1, \cdots, g_p)$ is not identically zero. After relabeling, we may assume $g_1\not\equiv 0$. First, we prove the case $q=0$. In this case, $p=1$, so the equation we must solve is simply $h_1g_1= f$. On the open dense subset $X\setminus\,\{g_1= 0\}$, the solution is uniquely determined as $h_1=f/\, g_1$, and the given condition gives
\vspace{0.2cm}
\[
\int_X \frac{\abs{f}_\omega^2e^{-\psi-\eta}}{\abs{g_1}^2e^{-\eta}}\,dV_\omega= \int_{X\setminus \{g_1= 0\}} \abs{h_1}^2_\omega e^{-\psi}<\infty{.}
\]
This implies $h_1$ is locally square-integrable across the locus $\{g_1= 0\}$ since we may assume $\psi$ is plurisubharmonic by \Cref{almoste}. By the Riemann extension theorem, $h_1$ extends to a global holomorphic section, which completes the proof for the case $q= 0$. \\ \\

To prove the case $q\neq 0$, we divide the proof into a few steps. From now on, we cannot assume $q=0$ due to the term $2(\alpha+1)/\,q(\alpha-1)^2$ in \Cref{Step4}.

\begin{step}[Approximation of singular hermitian metrics of the line bundle $NF-\alpha qNE$ for some large $N$] \label{Step1}
Choose a positive integer $N$ such that $\alpha N$ is a positive integer. By the curvature condition $\sqrt{-1}\partial\xoverline{\partial}\psi\geq \alpha q\cdot \sqrt{-1}\partial\xoverline{\partial}\eta$ and \Cref{074}, there exists a well-defined singular hermitian metric $e^{-\phi}$ of $NF-\alpha qNE$ such that the weight $\phi$ is plurisubharmonic and $\phi$ equals $N(\psi-\alpha q\eta)$ almost everywhere. By \Cref{regularization}, there exists a sequence of singular hermitian metrics $\{e^{-\phi_\rho}\}$ of $NF- \alpha qN E$ and a sequence $\{\delta_\rho\}$ of positive numbers which satisfies the following conditions:
\begin{itemize}
\item $\phi_\rho$ decreases to $\phi$ as $\rho\to 0$,
\item $\delta_\rho$ decreases to $0$ as $\rho\to 0$,
\item $\phi_\rho$ is smooth away from the closed analytic subset $E_\rho(\sqrt{-1}\partial\xoverline{\partial}\phi)$,
\item $\sqrt{-1}\partial\xoverline{\partial}\phi_\rho\geq -N\delta_\rho\omega$.
\end{itemize}   
At this point, we get a new family of singular hermitian metrics 
\setlength{\parskip}{0.5\baselineskip}
\[
\{e^{-\varphi_\rho}=e^{-(\frac{\phi_\rho}{N}+\alpha q \log\,\abs{g}^2)}\}
\]
of $F$. \\
\indent Let
\vspace{0.2cm}
\[
Z_\rho\coloneq E_\rho(\sqrt{-1} \partial\xoverline{\partial}(N(\psi- \alpha q \eta)))\cup\{g_1=0\}= E_\rho(\sqrt{-1} \partial\xoverline{\partial}\phi)\cup\{g_1=0\}{,}
\]
then by the definition of $Z_\rho$, we can regard $E$ as a trivial line bundle on $\Omega_\rho=X\setminus Z_\rho$ and $g_l$ as a holomorphic function on $\Omega_\rho$, so that we have no difficulty in applying \Cref{Ohss} or \Cref{Skoda} later. When we define $Z_\rho$, we can pick another $g_l$ other than $g_1$. \\
\indent Before we pass to the next step, we take a $\rho^*>0$ and a smooth hermitian metric $\tilde{\varphi}$ such that $\tilde{\varphi}\geq\varphi_{\rho^*}$. Such $\tilde{\varphi}$ exists because $\varphi_\rho$ is locally bounded above. These will be used in \Cref{Step5}.
\end{step}

\vspace{0.5cm}

\begin{step}[Construction of a family of metrics $\{\omega_{k,\,\rho}\}$, a family of smooth functions $\{\kappa_{j,\,k,\,\rho}\}$, and two sequences of subdomains $\{\tilde{\Omega}_{k,\,\rho}\}$ and $\{\Omega_{j,\,k,\,\rho}\}$] \label{Step2}
Let $\omega_\rho$ be a complete Kähler metric on $\Omega_\rho=X\setminus Z_\rho$ which satisfies the following condition: for any $K>1$, the open subset $\{ x\in \Omega_\rho= X\setminus Z_\rho: K\omega > \omega_\rho \}$ is relatively compact in $\Omega_\rho$. Such a metric always exists by the following lemma.
\begin{lemma}
Let $(X, \omega)$ be a compact Kähler manifold, and $Z$ be a proper analytic subset. Then there exists a complete Kähler metric $\omega'$ on $X\setminus Z$ which satisfies the following condition: for any $K>1$, the open subset $\Omega_K= \{ x\in X\setminus Z: K\omega > \omega' \}$ is relatively compact in $X\setminus Z$.
\end{lemma}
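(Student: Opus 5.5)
The plan is the standard Demailly construction of a complete Kähler metric on the complement of an analytic set. I will build a quasi-plurisubharmonic function that blows up logarithmically along $Z$ and then pass to a log–log potential.

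First, I will cover $X$ by finitely many coordinate balls $U_i$ on which $Z$ is cut out by holomorphic functions $s_{i,1},\dots,s_{i,m_i}$. Take a partition of unity $\{\theta_i\}$ subordinate to slightly larger balls, and set
\[
\tau=\log\Bigl(\sum_i\theta_i\sum_{r}\abs{s_{i,r}}^2\Bigr).
\]
This is a function on $X$, smooth on $X\setminus Z$, with $\tau\to-\infty$ exactly along $Z$. After subtracting a constant we may assume $\tau\le -e$ everywhere. Locally $\tau$ differs from $\log\sum_r\abs{s_{i,r}}^2$ by a bounded smooth function. Hence $\sqrt{-1}\partial\xoverline{\partial}\tau\ge -C\omega$ and $\sqrt{-1}\partial\tau\wedge\xoverline{\partial}\tau$ is controlled by $e^{-\tau}$ times bounded terms. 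The standard estimate is
\[
\sqrt{-1}\partial\xoverline{\partial}\tau\ge \frac{\sqrt{-1}\partial\tau\wedge\xoverline{\partial}\tau}{?}\ \text{(up to error)}\quad\text{and}\quad \sqrt{-1}\partial\xoverline{\partial}\tau\ge -C\omega .
\]

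Next, define
\[
\omega'=C'\omega+\sqrt{-1}\partial\xoverline{\partial}\log(-\tau)
\]
with $C'$ large. Compute
\[
\sqrt{-1}\partial\xoverline{\partial}\log(-\tau)=\frac{\sqrt{-1}\partial\xoverline{\partial}\tau}{\tau}+\frac{\sqrt{-1}\partial\tau\wedge\xoverline{\partial}\tau}{\tau^2}.
\]
The first term is $\ge -\frac{C}{\abs{\tau}}\omega\ge -C\omega$ up to the positive part. I will bound it from below using $\sqrt{-1}\partial\xoverline{\partial}\tau\ge -C\omega$ and $\tau<0$. Dividing by $\tau<0$ reverses the inequality, so here I need an upper bound on $\sqrt{-1}\partial\xoverline{\partial}\tau$ instead. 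I will handle this by writing $\tau=\log\abs{s}^2+O(1)$ locally, with $\sqrt{-1}\partial\xoverline{\partial}\log\abs{s}^2\ge0$. That gives
\[
\frac{\sqrt{-1}\partial\xoverline{\partial}\log\abs{s}^2}{\tau}\le 0.
\]
This term has the wrong sign, and it is the main technical obstacle. The classical fix (Demailly, \emph{Estimations $L^2$}, Théorème 1.5) is to use $\sqrt{-1}\partial\xoverline{\partial}(-\log(-\tau))$ instead, i.e.
\[
\omega'=C'\omega-\sqrt{-1}\partial\xoverline{\partial}\log(-\tau)=C'\omega+\frac{\sqrt{-1}\partial\xoverline{\partial}\tau}{-\tau}+\frac{\sqrt{-1}\partial\tau\wedge\xoverline{\partial}\tau}{\tau^2}.
\]
Here the middle term is $\ge -\frac{C}{\abs{\tau}}\omega\ge -C\omega$. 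So for $C'>C+1$ we get
\[
\omega'\ge \omega+\frac{\sqrt{-1}\partial\tau\wedge\xoverline{\partial}\tau}{\tau^2}.
\]
This makes $\omega'$ a Kähler metric on $X\setminus Z$ with $\omega'\ge\omega$. It is closed because it is $\partial\xoverline{\partial}$-exact plus a Kähler form.

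Completeness follows because the function $\chi=\log(-\tau)$ satisfies $\abs{d\chi}_{\omega'}\le \sqrt2$, since $\abs{\partial\chi}^2_{\omega'}\le1$ from the displayed lower bound. Moreover $\chi\to+\infty$ exactly at $Z$, and $X$ is compact. So $\chi$ is an exhaustion of $X\setminus Z$ with bounded gradient, and Hopf–Rinow gives completeness.

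Finally, the relative compactness of $\Omega_K$ reduces to showing that $\omega'$ dominates $K\omega$ near $Z$. Near a point of $Z$ I will take a local generator $s_{i,1}\not\equiv0$ and show that the term $\frac{\sqrt{-1}\partial\tau\wedge\xoverline{\partial}\tau}{\tau^2}+\frac{\sqrt{-1}\partial\xoverline{\partial}\tau}{-\tau}$ cannot stay bounded by $K\omega$ in all directions near $Z$. I expect this is false as stated for general $Z$ (the log–log metric is only unbounded in normal directions). So the set should be taken as $\{x: K\omega\ge\omega'\}$ interpreted as the trace comparison, with $\operatorname{tr}_\omega\omega'\to\infty$ near $Z$. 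If the paper's intended meaning is the trace (or the function $\chi$), then I will modify $\omega'$ by adding $\sqrt{-1}\partial\xoverline{\partial}$ of a bounded function, or simply note that $\abs{\partial\tau}^2_\omega/\tau^2\to\infty$ along $Z$ after a blow-up to normal crossings. This last point is where I expect the real difficulty.
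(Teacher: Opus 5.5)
Your construction of a complete Kähler metric is fine. It is Demailly's $\omega'=C'\omega-\sqrt{-1}\partial\xoverline{\partial}\log(-\tau)$, with $\chi=\log(-\tau)$ an exhaustion of bounded gradient. One justification is off: $\tau-\log\sum_r\abs{s_{i,r}}^2$ is bounded but not smooth across $Z$. The bound $\sqrt{-1}\partial\xoverline{\partial}\tau\geq -C\omega$ still holds, because $\tau$ is locally the logarithm of a smooth hermitian norm of a holomorphic tuple.

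The gap is the property that is the whole point of the lemma: the relative compactness of $\Omega_K$. You do not prove it, and you suspect it is false because of a misreading of the condition. The condition $K\omega>\omega'$ means $K\omega-\omega'$ is positive definite, i.e.\ $\omega'(v,\bar{v})<K\omega(v,\bar{v})$ for \emph{every} $v\neq 0$. So a point is excluded from $\Omega_K$ as soon as $\omega'\geq K\omega$ in a \emph{single} direction. Blow-up of $\omega'$ only in directions normal to $Z$ is therefore exactly what is needed, not an obstruction. For the same reason your ``trace interpretation'' is not a different statement: the largest eigenvalue of $\omega'$ relative to $\omega$ is at least $\frac1n\operatorname{tr}_\omega\omega'$. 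What must be shown is that this largest eigenvalue tends to $+\infty$ uniformly as $x\to Z$. Since $\omega'\geq\sqrt{-1}\partial\chi\wedge\xoverline{\partial}\chi$, it suffices that $\abs{\partial\tau}_\omega/\abs{\tau}\to\infty$ as $x\to Z$. This is precisely the step you leave open.

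For a general, possibly singular, $Z$ this is not immediate on $X$, and it is exactly where the paper uses Hironaka. The paper works on a modification $\pi:\tilde X\to X$, isomorphic over $X\setminus Z$, with $\pi^{-1}(Z)=\bigcup_j E_j$ SNC. It pushes forward the Poincar\'e-type metric $C\tilde\omega-\sum_j\sqrt{-1}\partial\xoverline{\partial}\log(-\log\abs{s_j}^2_{h_j})$. In coordinates with $E_j=\{z_j=0\}$ and $u_j=-\log\abs{s_j}^2_{h_j}$, this metric contains the semi-positive term $\sqrt{-1}\partial u_j\wedge\xoverline{\partial}u_j/u_j^2$, which blows up like $\sqrt{-1}dz_j\wedge d\bar{z}_j/(\abs{z_j}^2\log^2\abs{z_j}^2)$. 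Hence $AK\tilde\omega-\tilde\omega'$ fails to be positive definite near $\pi^{-1}(Z)$. Together with $\pi^*\omega\leq A\tilde\omega$, this keeps $\Omega_K$ away from $Z$.

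Your own route can be completed the same way:
\begin{itemize}
\item Take $\pi$ principalizing the ideal generated by the $s_{i,r}$, so that $\pi^*\tau=\log\abs{\sigma_D}^2_h+\beta$ with $D=\sum_j a_jE_j$ and $\beta$ smooth on $\tilde X$.
\item Then $\abs{\partial\pi^*\tau}_{\tilde\omega}/\abs{\pi^*\tau}\gtrsim 1/(m\log(1/m))$, where $m=\min_j\abs{z_j}$ over the components through the point.
\item Finally, $\pi^*\omega\leq A\tilde\omega$ gives $\abs{\partial\tau}_\omega\geq A^{-1/2}\abs{\partial\pi^*\tau}_{\tilde\omega}$ on $X\setminus Z$.
\end{itemize}
Without such an argument (or a {\L}ojasiewicz-type gradient inequality), your proposal establishes only completeness, not the relative compactness that the lemma is used for.
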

\begin{proof}
By Hironaka's theorem (\textit{cf.} \cite{Hir64}), there exists a proper modification $\pi:\tilde{X}\to X$ such that $\tilde{X}$ is also a compact Kähler manifold and the preimage $\pi^{-1}(Z)$ is an SNC divisor whose support is equal to $\sum^N_{j=1}E_j$. Here each $E_j$ is a prime divisor. Take a Kähler metric $\tilde{\omega}$ on $\tilde{X}$. Choose a smooth hermitian metric $h_j$ on the line bundle $\mathcal{O}(E_j)$ and canonical section $s_j$ cutting out $E_j$. By scaling the metric, we may assume $\abs{s_j}^2_{h_j}<e^{-1}$. \\
\indent Now on $\tilde{X}\setminus\pi^{-1}(Z)$, define the complete Kähler metric
\vspace{0.2cm}
\[
\tilde{\omega}'=C\tilde{\omega}-\sum^N_{j=1}\sqrt{-1}\partial\xoverline{\partial}\log(-\log\,\abs{s_j}^2_{h_j})
\]
for a sufficiently large $C>0$. We pushforward this metric to $X\setminus Z$ and call it $\omega'$. We claim $\omega'$ satisfies the condition: for any $K>1$, the open subset $\Omega_K= \{ x\in X\setminus Z: K\omega > \omega' \}$ is relatively compact in $X\setminus Z$. Pulling back $\Omega_K$ to $\tilde{X}\setminus\pi^{-1}(Z)$, it is equivalent to consider the condition $K\pi^*\omega>\tilde{\omega}'$. By compactness of $\tilde{X}$, there exists $A>0$ such that $A\tilde{\omega}\geq\pi^*\omega$. So it is enough to prove that the set $\{x\in \tilde{X}\setminus\pi^{-1}(Z): A\cdot K\cdot \tilde{\omega}>\tilde{\omega}'\}$ is relatively compact in $\tilde{X}\setminus\pi^{-1}(Z)$, which is clear.
\end{proof}
Define
\vspace{0.2cm}
\[
\omega_{k,\,\rho}=\omega+\frac{1}{k}\omega_\rho{,}
\]
which is still a complete Kähler metric. Define a sequence of domains 
\vspace{0.2cm}
\[
\tilde{\Omega}_{k,\,\rho}=\{x\in \Omega_\rho: \omega_\rho(x)<k\omega(x)\}\,\,\,(k\in\mathbb{Z}_{>0}){.}
\]
Then on $\tilde{\Omega}_{k,\,\rho}$ the inequality $\omega_{k,\,\rho}\leq2\omega$ holds, and
\vspace{0.2cm}
\[
\bigcup_k \tilde{\Omega}_{k,\,\rho}=\Omega_\rho{.}
\] 
\indent Take a smooth exhaustion function $r_{k,\,\rho}$ on $\Omega_\rho$ such that $\abs{d r_{k,\,\rho}}_{\omega_{k,\,\rho}}\leq 1$. Such an exhaustion function always exists due to \cite[~\rom{8}.2.4]{Dem97}. Since $\tilde{\Omega}_{k,\,\rho}$ is relatively compact in $\Omega_\rho$, there exists $R_{k,\,\rho}>0$ such that $\tilde{\Omega}_{k,\,\rho}\subset\{x: r_{k,\,\rho}(x)<R_{k,\,\rho}\}$. Now define again a sequence of domains
\vspace{0.2cm}
\[
\Omega_{j,\,k,\,\rho}=\{x\in \Omega_\rho:r_{k,\,\rho}(x)<R_{k,\,\rho}+j+1\}
\]
and a smooth function $\chi:\mathbb{R}\to[0,1]$ such that $\chi\rvert_{(-\infty, 0]} \equiv 1$ and $\chi\rvert_{[1, \infty)} \equiv 0$. We let $A=\sup\abs{\chi'}<\infty$. With the exhaustion function $r_{k,\,\rho}$, we define a cut-off function
\vspace{0.2cm}
\[
\kappa_{j,\,k,\,\rho}(x)=\chi(\frac{r_{k,\,\rho}(x)-R_{k,\,\rho}}{j}){.}
\]
Then we can observe the following properties:
\begin{itemize}
\item $2\omega\geq \omega_{k,\,\rho}$ on $\tilde{\Omega}_{k,\,\rho}$,
\item $\kappa_{j,\,k,\,\rho}$ has support in $\Omega_{j,\,k,\,\rho}$,
\item $\kappa_{j,\,k,\,\rho}\equiv 1$ on $\tilde{\Omega}_{k,\,\rho}$,
\item $\abs{d\kappa_{j,\,k,\,\rho}}_{\omega_{k,\,\rho}}\leq\frac{1}{j}\sup\abs{\chi'}\cdot\abs{dr_{k,\,\rho}}_{\omega_{k,\,\rho}}\leq\frac{A}{j}$.
\end{itemize}
\end{step}

\vspace{0.5cm}

\begin{step}[Obtaining a Bochner-Kodaira-Nakano inequality on $\Omega_{j,\,k,\,\rho}$ with respect to $\varphi_\rho$ and $\omega_{k,\,\rho}$] \label{Step3}
Take $\gamma=(\alpha+1)/2$, $t=C_\rho=2((\alpha-1)/(\alpha+1))\delta_\rho$, and
\vspace{0.2cm}
\[
\Psi=\Psi_\rho=\frac{\alpha-1}{\alpha+1}(\varphi_\rho-\frac{\alpha+1}{2}q\log\,\abs{g}^2)=\frac{\alpha-1}{\alpha+1}(\frac{\phi_\rho}{N}+\frac{\alpha-1}{2}q\log\,\abs{g}^2){.}
\]
Applying our $\gamma$, $t$, and $\Psi$ to the situation of \Cref{BKN}, combining with \Cref{Skoda}, we have a Bochner-Kodaira-Nakano inequality for $\xoverline{\partial}^*_{g;\,k,\,\rho}$ as an operator on $\Omega_{j,\,k,\,\rho}$ with respect to $e^{-\varphi_\rho}$ and the Kähler metric $\omega_{k,\,\rho}$: 
\vspace{0.2cm}
\begin{equation}
\begin{split}
& \norm{\xoverline{\partial}^*_{g;\,k,\,\rho}u}^2_{\varphi_\rho, \omega_{k,\,\rho}} +\norm{\xoverline{\partial}u}^2_{\varphi_\rho, \omega_{k,\,\rho}}+C_\rho\norm{u}^2_{\varphi_\rho, \omega_{k,\,\rho}}\geq \int_{\Omega_{j,\,k,\,\rho}}([\sqrt{-1}\partial\xoverline{\partial}\Psi_\rho+C_\rho\omega_{k,\,\rho}, \Lambda_{\omega_{k,\,\rho}}]u, u)_{\varphi_\rho,\,\omega_{k,\,\rho}}\,dV_{\omega_{k,\,\rho}} \\
& =\int_{\Omega_{j,\,k,\,\rho}}([\sqrt{-1}\partial\xoverline{\partial}\Psi_\rho+\frac{2(\alpha-1)}{\alpha+1}\delta_\rho\omega_{k,\,\rho}, \Lambda_{\omega_{k,\,\rho}}]u, u)_{\varphi_\rho,\,\omega_{k,\,\rho}}\,dV_{\omega_{k,\,\rho}}{,}
\end{split}
\label{newBoc}
\end{equation}
which corresponds to \eqref{modifiedBoch}, for all $u\in D_1(\Omega_{j,\,k,\,\rho}, F)$. Here we use the notation $\xoverline{\partial}^*_{g;\,k,\,\rho}$ to specify the metrics $\omega_{k,\,\rho}$ and $e^{-\varphi_\rho}$ we are using. Note that 
\vspace{0.2cm}
\begin{equation}
\sqrt{-1}\partial\xoverline{\partial}\Psi_\rho+\frac{2(\alpha-1)}{\alpha+1}\delta_\rho\omega_{k,\,\rho}\geq\frac{(\alpha-1)^2}{2(\alpha+1)}q\sqrt{-1}\partial\xoverline{\partial}\log\,\abs{g}^2+ \frac{\alpha- 1}{\alpha+ 1}\delta_\rho \omega_{k,\,\rho}> 0{.}
\label{ttt}
\end{equation}
\end{step}
\vspace{0.5cm}

\begin{step}[Extracting a unique weak solution $w_{j,\,k,\,\rho}$ of a $(\square_{g;\,k,\,\rho}+C_\rho I)$-equation] \label{Step4}
Put $v=f\wedge\,\xoverline{\partial}(\xoverline{g}/\,\abs{g}^2)$ on $X\setminus\{g_1=0\}$, which is a member of $L^2_{(n,\,1)}(\Omega_{j,\,k,\,\rho}, F, \varphi_\rho, \omega_{k,\,\rho})^{\oplus p}$. Since $g\cdot v=0$ and $g\cdot (\kappa v)=0$ for any $\kappa\in C^\infty_0(\Omega_{j,\,k,\,\rho})$, it follows that $v\in\mathcal{H}(\Omega_{j,\,k,\,\rho}, F, \varphi_\rho, \omega_{k,\,\rho})$. We refer to \Cref{BKN} to check the definition of $\mathcal{H}(\Omega_{j,\,k,\,\rho}, F, \varphi_\rho, \omega_{k,\,\rho})$. Thus by \Cref{FA}, there is a unique $w_{j,\,k,\,\rho}\in\mathcal{H}_g(\Omega_{j,\,k,\,\rho}, F, \varphi_\rho, \omega_{k,\,\rho})$ such that 
\vspace{0.2cm}
\[
(\xoverline{\partial}\xoverline{\partial}^*_{g;\,k,\,\rho}+\xoverline{\partial}^*_{g;\,k,\,\rho}\xoverline{\partial}+C_\rho I)w_{j,\,k,\,\rho}=(\square_{g;\,k,\,\rho}+C_\rho I)w_{j,\,k,\,\rho}=v
\]
on $\Omega_{j,\,k,\,\rho}$ and
\vspace{0.2cm}
\begin{flalign*}
&\max \Bigl\{  \norm{\xoverline{\partial} w_{j,\,k,\,\rho}}^2_{\varphi_\rho,\,\omega_{k,\,\rho}}, \, 
               \norm{ \xoverline{\partial}^*_{g;\,k,\,\rho} w_{j,\,k,\,\rho}}^2_{\varphi_\rho,\, \omega_{k,\,\rho}}, \, 
               C_\rho \norm{ w_{j,\,k,\,\rho}}^2_{\varphi_\rho, \,\omega_{k,\,\rho}} \Bigr\} \\
    &\leq \norm{v}^2_{\varphi_\rho,\, \sqrt{-1}\partial\bar{\partial}\Psi_\rho+2((\alpha-1)/(\alpha+1))\delta_\rho\omega_{k,\,\rho}}\leq \frac{2(\alpha+1)}{q(\alpha-1)^2} \int_{\Omega_{j,\,k,\,\rho}} \abs{f}_\omega^2 \abs{\xoverline{\partial}(\xoverline{g}/\,\abs{g}^2)}^2_{\sqrt{-1}\partial\bar{\partial}\log\,\abs{g}^2+ (2\delta_\rho/\, q(\alpha- 1))\omega_{k,\,\rho}} e^{-\varphi_\rho}\,dV_{\omega} \\
&\leq \frac{2(\alpha+1)}{(\alpha-1)^2} \int_{\Omega_{j,\,k,\,\rho}} \abs{f}_\omega^2 \abs{g}^{-2}e^{-\varphi_\rho}\,dV_\omega \leq \frac{2(\alpha+1)}{(\alpha-1)^2} \int_{\Omega_{j,\,k,\,\rho}} \abs{f}_\omega^2 \abs{g}^{-2(\alpha q+1)} e^{-\frac{\phi_\rho}{N}}\,dV_\omega \\
&\leq \frac{2(\alpha+1)}{(\alpha-1)^2} \int_X \abs{f}_\omega^2 \abs{g}^{-2(\alpha q+1)} e^{-\frac{\phi}{N}}\,dV_\omega= \frac{2(\alpha+1)}{(\alpha-1)^2} \int_X \frac{\abs{f}_\omega^2 e^{-\psi-\eta}}{(\abs{g}^2 e^{-\eta})^{\alpha q+1}}\,dV_\omega= M < \infty{,}
\end{flalign*}
where the second inequality follows from \eqref{ttt} and the third inequality follows from \Cref{726}. Here $\abs{g}$ is just the absolute value of $g$ as a tuple of holomorphic functions. Recall that we can regard $g_l$ as a holomorphic function on $\Omega_\rho$. Since $(\square_{g;\,k,\,\rho}+C_\rho I)w_{j,\,k,\,\rho}=v$ and $\square_{g;\,k,\,\rho}+C_\rho I$ is an elliptic operator of order two, we know $w_{j,\,k,\,\rho}$ is smooth as a form on $\Omega_{j,\,k,\,\rho}$. 
\end{step}
\vspace{0.5cm}

\begin{step}[Extracting weak limits $\xi$, $\zeta$, and $\theta$ as $j\to\infty$, followed by $k\to\infty$, and finally $\rho\to0$] \label{Step5}
From now on, we consider mainly the family $\{w_{j,\,k,\,\rho}\}$ with $\rho\leq\rho^*$, so that $\tilde{\varphi}\geq \varphi_\rho$. Recall that $\rho^*$ was chosen in \Cref{Step1}. Depending on the context, we view $w_{j,\,k,\,\rho}$ as a function on either $\Omega_{j,\,k,\,\rho}$ or its subset $\tilde{\Omega}_{k,\,\rho}$ via restriction. In the latter case, we denote it as $w_{j,\,k,\,\rho}\rvert_{\tilde{\Omega}_{k,\,\rho}}$. \\
\indent The inequality 
\vspace{0.2cm}
\begin{flalign*}
&\norm{\sqrt{C_\rho}w_{j,\,k,\,\rho}\rvert_{\tilde{\Omega}_{k,\,\rho}}}^2_{\tilde{\varphi},\,\omega}=\int_{\tilde{\Omega}_{k,\,\rho}}\abs{\sqrt{C_\rho}w_{j,\,k,\,\rho}}^2_{\omega}e^{-\tilde{\varphi}}\,dV_\omega \\
&\leq 2\int_{\tilde{\Omega}_{k,\,\rho}}\abs{\sqrt{C_\rho}w_{j,\,k,\,\rho}}^2_{\omega_{k,\,\rho}}e^{-\varphi_\rho}\,dV_{\omega_{k,\,\rho}}=2\norm{\sqrt{C_\rho}w_{j,\,k,\,\rho}\rvert_{\tilde{\Omega}_{k,\,\rho}}}^2_{\varphi_\rho,\,\omega_{k,\,\rho}}\leq2\norm{\sqrt{C_\rho}w_{j,\,k,\,\rho}}^2_{\varphi_\rho,\,\omega_{k,\,\rho}}\leq 2\cdot M
\end{flalign*}
implies that for each pair $(k, \rho)$, the family $\{\sqrt{C_\rho}w_{j,\,k,\,\rho}\rvert_{\tilde{\Omega}_{k,\,\rho}}\}_j$ has a weakly convergent subsequence in 
\vspace{0.2cm}
\[
L^2_{(n,\,1)}(\tilde{\Omega}_{k,\,\rho}, F, \tilde{\varphi}, \omega)^{\oplus p}{.}
\]
	Let us call the weak limit $\sqrt{C_\rho}w_{k,\,\rho}\rvert_{\tilde{\Omega}_{k,\,\rho}}$. But since $\norm{\sqrt{C_\rho}w_{k,\,\rho}\rvert_{\tilde{\Omega}_{k,\,\rho}}}^2_{\tilde{\varphi},\,\omega}\leq2\cdot M$, the family $\{\sqrt{C_\rho}w_{k,\,\rho}\rvert_{\tilde{\Omega}_{k,\,\rho}}\}_k$ has a weakly convergent subsequence in
	\vspace{0.2cm} 
	\[
	L^2_{(n,\,1)}(X, F, \tilde{\varphi}, \omega)^{\oplus p}{.}
	\]
Let us call the weak limit $\sqrt{C_\rho}w_\rho$ with the estimate $\norm{\sqrt{C_\rho}w_\rho}^2_{\tilde{\varphi},\,\omega}\leq 2\cdot M$, which implies $\norm{C_\rho w_\rho}^2_{\tilde{\varphi},\,\omega}\leq 2\cdot C_\rho\cdot M$.    \\
\indent Similarly the inequality 
\vspace{0.2cm}
\begin{flalign*}
& \norm{\xoverline{\partial}^*_{g;\, k,\,\rho}w_{j,\,k,\,\rho}\rvert_{\tilde{\Omega}_{k,\,\rho}}}^2_{\tilde{\varphi},\,\omega}= \int_{\tilde{\Omega}_{k,\,\rho}} \abs{\xoverline{\partial}^*_{g;\, k,\,\rho}w_{j,\,k,\,\rho}}^2_\omega e^{-\tilde{\varphi}}\,dV_\omega
\leq \int_{\tilde{\Omega}_{k,\,\rho}} \abs{\xoverline{\partial}^*_{g;\, k,\,\rho}w_{j,\,k,\,\rho}}^2_{\omega_{k,\, \rho}} e^{-\varphi_{\rho}}\,dV_{\omega_{k,\, \rho}}\\
& = \norm{\xoverline{\partial}^*_{g;\, k,\,\rho}w_{j,\,k,\,\rho}\rvert_{\tilde{\Omega}_{k,\,\rho}}}^2_{\varphi_{\rho},\,\omega_{k,\,\rho}}\leq \norm{\xoverline{\partial}^*_{g;\, k,\,\rho}w_{j,\,k,\,\rho}}^2_{\varphi_{\rho},\, \omega_{k,\,\rho}}\leq M{.}
\end{flalign*}
implies that for each pair $(k, \rho)$, the family $\{\xoverline{\partial}^*_{g;\, k,\,\rho}w_{j,\,k,\,\rho}\rvert_{\tilde{\Omega}_{k,\,\rho}}\}_j$ has a weakly convergent subsequence in
\vspace{0.2cm}
\[
L^2_{(n,\, 0)}(\tilde{\Omega}_{k,\,\rho}, F, \tilde{\varphi}, \omega)^{\oplus p}{.}
\]
Let us call the weak limit $\xi_{k,\,\rho}\rvert_{\tilde{\Omega}_{k,\,\rho}}$. But since $\norm{\xi_{k,\,\rho}\rvert_{\tilde{\Omega}_{k,\,\rho}}}^2_{\tilde{\varphi},\,\omega}\leq M$, the family $\{\xi_{k,\,\rho}\rvert_{\tilde{\Omega}_{k,\,\rho}}\}_k$ has a weakly convergent subsequence in
\vspace{0.2cm}
\[
L^2_{(n,\, 0)}(X, F, \tilde{\varphi}, \omega)^{\oplus p}{.}
\]
Let us call the weak limit $\xi_{\rho}$ with the estimate $\norm{\xi_{\rho}}^2_{\tilde{\varphi},\,\omega}\leq M$. \\
\indent Now for each $\rho$, we want to extract an appropriate weak limit $\zeta_\rho$ from the family $\{\xoverline{\partial}^*_{g;\,k,\,\rho}\xoverline{\partial}(w_{j,\,k,\,\rho}\rvert_{\tilde{\Omega}_{k,\,\rho}})\}_{j,\,k}$. For this, we first note that
\vspace{0.2cm}
\begin{flalign*}
&(-C_\rho\xoverline{\partial}w_{j,\,k,\,\rho},\kappa^2_{j,\,k,\,\rho}\xoverline{\partial}w_{j,\,k,\,\rho})_{\varphi_\rho,\,\omega_{k,\,\rho}}=(\xoverline{\partial}\xoverline{\partial}^*_{g;\,k,\,\rho}\xoverline{\partial}w_{j,\,k,\,\rho}, \kappa^2_{j,\,k,\,\rho}\xoverline{\partial}w_{j,\,k,\,\rho})_{\varphi_\rho,\,\omega_{k,\,\rho}} \\
&=\norm{\kappa_{j,\,k,\,\rho}\xoverline{\partial}^*_{g;\,k,\,\rho}\xoverline{\partial}w_{j,\,k,\,\rho}}^2_{\varphi_\rho,\,\omega_{k,\,\rho}}-2(\kappa_{j,\,k,\,\rho}\xoverline{\partial}\kappa_{j,\,k,\,\rho}\wedge\xoverline{\partial}^*_{g;\,k,\,\rho}\xoverline{\partial}w_{j,\,k,\,\rho},\xoverline{\partial}w_{j,\,k,\,\rho})_{\varphi_\rho,\,\omega_{k,\,\rho}}{,}
\end{flalign*}
where the smoothness of $w_{j,\,k,\,\rho}$ is necessarily used in the second equality, so that
\vspace{0.2cm}
\begin{flalign*}
&\norm{\xoverline{\partial}^*_{g;\,k,\,\rho}\xoverline{\partial}(w_{j,\,k,\,\rho}\rvert_{\tilde{\Omega}_{k,\,\rho}})}^2_{\tilde{\varphi},\,\omega}=\int_{\tilde{\Omega}_{k,\,\rho}}\abs{\xoverline{\partial}^*_{g;\,k,\,\rho}\xoverline{\partial} w_{j,\,k,\,\rho}}^2_\omega e^{-\tilde{\varphi}}\,dV_\omega\leq 2\norm{\kappa_{j,\,k,\,\rho}\xoverline{\partial}^*_{g;\,k,\,\rho}\xoverline{\partial}w_{j,\,k,\,\rho}}^2_{\varphi_\rho,\,\omega_{k,\,\rho}} \\
&\leq\frac{4A}{j}\norm{\xoverline{\partial}w_{j,\,k,\,\rho}}_{\varphi_\rho,\,\omega_{k,\,\rho}}\norm{\kappa_{j,\,k,\,\rho}\xoverline{\partial}^*_{g;\,k,\,\rho}\xoverline{\partial}w_{j,\,k,\,\rho}}_{\varphi_\rho,\,\omega_{k,\,\rho}} \\
& \leq \frac{4A\cdot\sqrt{M}}{j}\norm{\kappa_{j,\,k,\,\rho}\xoverline{\partial}^*_{g;\,k,\,\rho}\xoverline{\partial}w_{j,\,k,\,\rho}}_{\varphi_\rho,\,\omega_{k,\,\rho}}{.}
\end{flalign*}
But the second and third inequalities imply that 
\vspace{0.2cm}
\[
\norm{\kappa_{j,\,k,\,\rho}\xoverline{\partial}^*_{g;\,k,\,\rho}\xoverline{\partial}w_{j,\,k,\,\rho}}_{\varphi_\rho,\,\omega_{k,\,\rho}}\leq \frac{2\cdot A\cdot \sqrt{M}}{j}{,}
\]
and since the right hand side is uniformly bounded, the sequence $\{\xoverline{\partial}^*_{g;\,k,\,\rho}\xoverline{\partial}(w_{j,\,k,\,\rho}\rvert_{\tilde{\Omega}_{k,\,\rho}})\}_j$ has a weakly convergent subsequence in 
\vspace{0.2cm}
\[
L^2_{(n,\,1)}(\tilde{\Omega}_{k,\,\rho}, F, \tilde{\varphi}, \omega)^{\oplus p}{.}
\]
We call the weak limit $\zeta_{k,\,\rho}\rvert_{\tilde{\Omega}_{k,\,\rho}}$. But since 
\vspace{0.2cm}
\[
\norm{\zeta_{k,\,\rho}\rvert_{\tilde{\Omega}_{k,\,\rho}}}^2_{\tilde{\varphi},\,\omega}\leq\lim_{j\to \infty}\frac{4\cdot A\cdot\sqrt{M}}{j}\cdot\frac{2\cdot A\cdot\sqrt{M}}{j}=0{,}
\]
the family $\{ \zeta_{k,\,\rho}\rvert_{\tilde{\Omega}_{k,\,\rho}}\}_k$ has a weakly convergent subsequence in 
\vspace{0.2cm}
\[
L^2_{(n,\,1)}(X, F, \tilde{\varphi}, \omega)^{\oplus p}{.}
\] 
Let us call the weak limit $\zeta_\rho$ with the estimate $\norm{\zeta_\rho}^2_{\tilde{\varphi},\,\omega}=0$. \\
\indent Recall that we have checked
\vspace{0.2cm} 
\[
\xoverline{\partial}\xoverline{\partial}^*_{g;\,k,\,\rho}w_{j,\,k,\,\rho}+\xoverline{\partial}^*_{g;\,k,\,\rho}\xoverline{\partial}w_{j,\,k,\,\rho}+C_\rho w_{j,\,k,\,\rho}=v{,}
\]
and by choosing subsequences successively, we may assume without loss of generality that the weak limits $w_{k,\,\rho}$, $\xi_{k,\,\rho}$, and $\zeta_{k,\,\rho}$ are obtained along the same subsequence of $j$, and similarly $w_\rho$, $\xi_\rho$, and $\zeta_\rho$ are obtained along the same subsequence of $k$. Thus, by taking the weak limit in the above equation, the equality $\xoverline{\partial}\xi_\rho+\zeta_\rho+C_\rho w_\rho=v$ holds on $\Omega_\rho$ as distributions. Indeed this equality holds as distributions on the whole $X$ by the following lemma.

\vspace{0.2cm}
\begin{lemma}{\cite[Lemma 6.9]{Dem82}} \label{dislem}
Let $\Omega$ be a domain in $\mathbb{C}^n$, $n\geq1$. Let $Z$ be a proper analytic subset of $\Omega$. Let $u$ be an $L^2$-integrable measurable function and $v$ be a $(0, 1)$-form with $L^1$-integrable measurable coefficients. Assume that $\xoverline{\partial}u =v$ as distributions on $\Omega\setminus Z$. Then this equality holds as distributions on the whole $\Omega$.
\end{lemma}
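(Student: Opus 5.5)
We want to prove the statement of \Cref{dislem}. Since it is local, we fix a point of $Z$ and work near it. The plan is the standard cut-off argument: we multiply test forms by cut-offs that vanish near $Z$ and show that the resulting error term disappears in the limit, using $u\in L^2$ and the fact that $Z$ has capacity zero, meaning there are cut-offs whose $\xoverline{\partial}$ is small in $L^2$.

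We proceed as follows.

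\textbf{Reduction to a hypersurface.} First shrink $\Omega$ so that $Z\subset\{s=0\}$ for a single holomorphic function $s\not\equiv 0$. Since the hypothesis only gets weaker when $Z$ is enlarged, we may replace $Z$ by $\{s=0\}$.

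\textbf{Choice of cut-offs.} Fix a test form $\beta$ of bidegree $(n,n-1)$ with compact support in $\Omega$. We must show
\[
\int u\,\xoverline{\partial}\beta=\pm\int v\wedge\beta .
\]
Choose $\chi$ smooth, equal to $0$ on $(-\infty,1]$ and to $1$ on $[2,\infty)$. Set
\[
\theta_\epsilon=\chi\bigl(\epsilon\log(-\log\,\abs{s}^2)\bigr)
\]
(on a region where $\abs{s}<1$; extend by $1$ away from $Z$), or take a simpler variant with the same properties. Then $\theta_\epsilon\beta$ is a legitimate test form on $\Omega\setminus Z$, so
\[
\int u\,\xoverline{\partial}(\theta_\epsilon\beta)=\pm\int v\wedge\theta_\epsilon\beta .
\]

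\textbf{Passing to the limit.} As $\epsilon\to0$ we have $\theta_\epsilon\to1$ pointwise off $Z$, and $Z$ has measure zero. Since $v\in L^1$, dominated convergence handles the right-hand side and the term $\int u\,\theta_\epsilon\,\xoverline{\partial}\beta$. It remains to show
\[
\int u\,\xoverline{\partial}\theta_\epsilon\wedge\beta\to0 .
\]

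\textbf{The error term (the main obstacle).} By Cauchy--Schwarz this term is bounded by
\[
\norm{u}_{L^2(\operatorname{supp}\xoverline{\partial}\theta_\epsilon\,\cap\,\operatorname{supp}\beta)}\cdot\norm{\xoverline{\partial}\theta_\epsilon}_{L^2(\operatorname{supp}\beta)} .
\]
The first factor tends to $0$ because the supports shrink to $Z$ and $u\in L^2_{\mathrm{loc}}$. For the second factor we need it to remain bounded. We have
\[
\abs{\xoverline{\partial}\theta_\epsilon}\lesssim\frac{\epsilon\,\abs{\partial s}}{\abs{s}\,(-\log\,\abs{s}^2)},
\]
so we must bound
\[
\int\frac{\epsilon^2\,\abs{\partial s}^2}{\abs{s}^2\log^2\abs{s}^2}\,dV .
\]
The local model is a regular point of $\{s=0\}$ in the coordinate $z_1=s$. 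There $\int_{\abs{z_1}<1/2}\frac{dV}{\abs{z_1}^2\log^2\abs{z_1}}<\infty$, so the term is $O(\epsilon^2)\to0$. At singular points, if the uniform bound is not clear, we plan two fallbacks. One is to proceed by induction on the dimension of $Z$: first prove the extension across $Z\setminus Z_{\mathrm{sing}}$ using the regular-point model, then across $Z_{\mathrm{sing}}$, which has codimension at least $2$ in $\Omega$, where even the simple cut-off $\chi(\abs{z'}/\epsilon)$ in the transverse variables $z'$ has $\norm{\xoverline{\partial}\chi}_{L^2}=O(\epsilon^{2-1})\to0$. The other is to resolve singularities of $\{s=0\}$ by Hironaka so that locally $s$ is a monomial, and then estimate directly. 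The bookkeeping of this error term at singular points is the step I expect to require the most care.
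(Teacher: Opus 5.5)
The paper gives no proof of this lemma; it quotes \cite[Lemma 6.9]{Dem82}. Demailly's argument is essentially your first fallback. By induction on $\dim Z$ it suffices to cross $Z$ near a regular point, where after a change of coordinates $Z\subset\{z_1=0\}$. With $\lambda_\epsilon=\lambda(\abs{z_1}/\epsilon)$ one has $\abs{\xoverline{\partial}\lambda_\epsilon}\leq C/\epsilon$ on a slab of volume $O(\epsilon^2)$. So $\norm{\xoverline{\partial}\lambda_\epsilon}_{L^2}$ merely stays bounded, and the error term vanishes because $\norm{u}_{L^2(\{\abs{z_1}\leq\epsilon\})}\to0$.

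In that scheme $Z_{\mathrm{sing}}$ must be crossed by the induction hypothesis, applied to a general analytic set of smaller dimension (no longer a hypersurface). It cannot be crossed by one transverse cut-off $\chi(\abs{z'}/\epsilon)$: coordinates $z'$ with $Z_{\mathrm{sing}}\subset\{z'=0\}$ exist only near regular points of $Z_{\mathrm{sing}}$. Once the induction is set up, your codimension-two gain is unnecessary.

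Your main route (reduction to a hypersurface plus a cut-off in $\log(-\log\abs{s}^2)$) is genuinely different and also works. The estimate you left open at singular points closes in one line. On $\{\abs{s}<1\}$ the function $\varpi=-\log(-\log\abs{s}^2)=F(\log\abs{s}^2)$, with $F(t)=-\log(-t)$ convex and increasing on $(-\infty,0)$, is plurisubharmonic, and off $Z$
\[
\sqrt{-1}\partial\xoverline{\partial}\varpi=\frac{\sqrt{-1}\,\partial s\wedge\xoverline{\partial}\xoverline{s}}{\abs{s}^2\log^2\abs{s}^2}{.}
\]
Hence your integrand $\abs{\partial s}^2/(\abs{s}^2\log^2\abs{s}^2)$ is, on $\Omega\setminus Z$, the density of the positive Radon measure $\tfrac14\Delta\varpi$. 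It is therefore integrable on every compact subset of $\{\abs{s}<1\}$, whatever the singularities of $Z$. After shrinking so that $\abs{s}<1$, this gives $\norm{\xoverline{\partial}\theta_\epsilon}_{L^2(\operatorname{supp}\beta)}=O(\epsilon)$ at every point of $Z$. You then need neither Hironaka nor stratification, and you do not even need the first Cauchy--Schwarz factor to shrink.

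The trade-off is this: your route uses a single cut-off and no induction, at the price of this plurisubharmonicity observation. Demailly's uses nothing beyond the volume of a slab, but needs the stratification.

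One slip to fix. With $\chi=0$ on $(-\infty,1]$ and $\chi=1$ on $[2,\infty)$, your $\theta_\epsilon$ equals $1$ near $Z$ and tends to $0$ off $Z$. It is therefore not a test-form multiplier supported away from $Z$; replace $\chi$ by $1-\chi$.
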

To apply the above lemma, we claim that our $v= f\wedge\,\xoverline{\partial}(\xoverline{g}/\, \abs{g}^2)$ satisfies $v\in L^1_{\mathrm{loc}}$. Take any small enough relatively compact domain $K$ in $X$. Then it is enough to check that
\[
\int_K \frac{\abs{f}_\omega}{\abs{g}^2}\, dV_\omega< \infty{.}
\]
But this follows from the Cauchy-Schwarz inequality
\[
\int_K \frac{\abs{f}_\omega}{\abs{g}^2}\, dV_\omega\leq \left( \int_K \frac{\abs{f}^2_\omega e^{-\psi +\alpha q\eta}}{\abs{g}^{2\alpha q+ 2}}\, dV_\omega\right)^{1/\, 2} \left(\int_K \abs{g}^{2\alpha q- 2} e^{\psi- \alpha q\eta}\, dV_\omega\right)^{1/\, 2}{.}
\]
The first term in the right hand side is finite by our assumption
\[
\int_X \frac{\abs{f}^2_\omega e^{-\psi- \eta}}{(\abs{g}^2 e^{-\eta})^{\alpha q+ 1}}\, dV_\omega< \infty{.}
\]
The second term in the right hand side is also finite since we may assume $\psi- \alpha q\eta$ is a plurisubharmonic function, which is locally bounded above, by the curvature assumption $\sqrt{-1}\partial\xoverline{\partial}\psi\geq \alpha q \cdot \sqrt{-1}\partial\xoverline{\partial}\eta$. Hence, $v\in L^1_{\mathrm{loc}}$. \\
\indent Furthermore, since 
\vspace{0.2cm}
\[
\norm{\xi_\rho}_{\tilde{\varphi},\,\omega}\leq\sqrt{M}{,}\,\,\,\norm{\zeta_\rho}_{\tilde{\varphi},\,\omega}=0{,}\,\,\,\text{and}\,\,\,\norm{C_\rho w_\rho}_{\tilde{\varphi},\,\omega}\leq\sqrt{2\cdot C_\rho\cdot M}
\]
for $\rho\leq\rho^*$, the sequences $\{\xi_\rho\}_\rho$, $\{\zeta_\rho\}_\rho$, and $\{C_\rho w_\rho\}_\rho$ all possess weakly convergent subsequences in 
\vspace{0.2cm}
\[
L^2_{(n,\,0)}(X, F, \tilde{\varphi}, \omega)^{\oplus p}{,}\,\,\,L^2_{(n,\,1)}(X, F, \tilde{\varphi} ,\omega)^{\oplus p}{,}\,\,\,\text{and}\,\,\,L^2_{(n,\,1)}(X, F, \tilde{\varphi}, \omega)^{\oplus p}{,}
\]
respectively. We denote their respective weak limits as $\xi$, $\zeta$, and $\theta$. By choosing subsequences successively, we may assume without loss of generality that their respective weak limits are obtained along the same subsequence of $\rho$. Then clearly, the equality $\xoverline{\partial}\xi+\zeta+\theta=v$ holds as distributions. Indeed, we can check that $\zeta=\theta=0$ as distributions. This follows from
\vspace{0.2cm}
\[
\lim_{\rho\to0}\norm{C_\rho w_\rho}_{\tilde{\varphi},\,\omega}\leq\lim_{\rho\to0}\sqrt{2\cdot C_\rho\cdot M}=0\,\,\,\text{and}\,\,\,\norm{\zeta_\rho}_{\tilde{\varphi},\,\omega}=0{.}
\]
\end{step}
\vspace{0.5cm}

\begin{step}[End of the proof of the main theorem] \label{Step6}
Since $\zeta=\theta=0$, we can check $\xoverline{\partial}\xi=v$. Now define
\vspace{0.2cm}
\[
h=f\cdot(\xoverline{g}/\,\abs{g}^2)-\xi
\]
on the domain $X\setminus\{g_1=0\}$. We can readily check $g\cdot h=f$ since $g\cdot \xi=0$, and $\xoverline{\partial}h=v-v=0$. Hence our main theorem holds on the domain $\abs{g}\neq0$, by taking the holomorphic representative of $h$, with the estimate
\vspace{0.2cm}
\[
\int_X\frac{\abs{h}_\omega^2e^{-\psi}}{(\abs{g}^2e^{-\eta})^{\alpha q}}\,dV_\omega\leq \Bigl(2+\frac{4(\alpha+1)}{(\alpha-1)^2}\Bigr) \int_X\frac{\abs{f}_\omega^2e^{-\psi-\eta}}{(\abs{g}^2e^{-\eta})^{\alpha q+1}}\,dV_\omega{.}
\]
This estimate follows from the fact that $\abs{h}_\omega^2\leq 2(\abs{\xi}_\omega^2+\abs{f\cdot (\xoverline{g}/\,\abs{g}^2)}_\omega^2)$ and the following lemma:
\vspace{0.2cm}
\begin{lemma}
We have the following estimate for $\xi$:
\[
\int_X\frac{\abs{\xi}_\omega^2e^{-\psi}}{(\abs{g}^2e^{-\eta})^{\alpha q}}\,dV_\omega\leq M= \frac{2(\alpha+1)}{(\alpha-1)^2}\int_X\frac{\abs{f}_\omega^2e^{-\psi-\eta}}{(\abs{g}^2e^{-\eta})^{\alpha q+1}}\,dV_\omega{.}
\]
\end{lemma}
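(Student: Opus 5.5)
The estimate will come from lower semicontinuity of norms under weak limits, applied in the same order in which $\xi$ was built. The only change is that we must track the weight $e^{-\varphi_\rho}$ itself rather than the crude majorant $e^{-\tilde{\varphi}}$. First note that, almost everywhere on $X\setminus Z_\rho$,
\[
e^{-\varphi_\rho}=e^{-\phi_\rho/N}\abs{g}^{-2\alpha q},
\]
and that $\phi/N=\psi-\alpha q\eta$ almost everywhere. Hence the target integrand $\abs{\xi}^2_\omega e^{-\psi}(\abs{g}^2e^{-\eta})^{-\alpha q}$ equals $\abs{\xi}^2_\omega e^{-\phi/N}\abs{g}^{-2\alpha q}$, with $\abs{g}$ read in the trivialization of $E$ on $X\setminus\{g_1=0\}$. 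So the claim is $\int_X\abs{\xi}^2_\omega e^{-\varphi}\,dV_\omega\le M$, where $\varphi=\phi/N+\alpha q\log\abs{g}^2$ and $\varphi_\rho$ decreases to $\varphi$.

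\textbf{Step 1 ($j\to\infty$).} Fix $\rho\le\rho^*$ and a compact set $K\subset\tilde{\Omega}_{k,\rho}$. On $K$ the weight $e^{-\varphi_\rho}$ is smooth and bounded, and $2\omega\ge\omega_{k,\rho}$. For $(n,0)$-forms we have $\abs{u}^2_{\omega}dV_\omega\le\abs{u}^2_{\omega_{k,\rho}}dV_{\omega_{k,\rho}}$, since $\omega\le\omega_{k,\rho}$ (this is the same comparison used in Step 5). Therefore
\[
\int_K\abs{\bar\partial^*_{g;k,\rho}w_{j,k,\rho}}^2_\omega e^{-\varphi_\rho}\,dV_\omega\le M .
\]
Weak convergence in $L^2(K,\tilde\varphi,\omega)$ implies weak convergence in $L^2(K,\varphi_\rho,\omega)$, because the ratio $e^{\tilde\varphi-\varphi_\rho}$ is bounded on $K$. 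Lower semicontinuity of the norm then gives $\int_K\abs{\xi_{k,\rho}}^2_\omega e^{-\varphi_\rho}\,dV_\omega\le M$. Letting $K$ exhaust $\tilde{\Omega}_{k,\rho}$ gives the bound on all of $\tilde{\Omega}_{k,\rho}$.

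\textbf{Step 2 ($k\to\infty$).} The sets $\tilde\Omega_{k,\rho}$ increase to $\Omega_\rho$. For a fixed compact $K\subset\Omega_\rho$, we have $K\subset\tilde\Omega_{k,\rho}$ for all large $k$. Repeating the lower-semicontinuity argument on $K$ gives $\int_K\abs{\xi_\rho}^2_\omega e^{-\varphi_\rho}\,dV_\omega\le M$. Since $Z_\rho$ has measure zero, this yields $\int_X\abs{\xi_\rho}^2_\omega e^{-\varphi_\rho}\,dV_\omega\le M$.

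\textbf{Step 3 ($\rho\to0$).} This is the main obstacle, because the weights now vary with $\rho$. I would use monotonicity. For $\rho\le\rho'$ we have $\varphi_\rho\le\varphi_{\rho'}$, hence $e^{-\varphi_{\rho'}}\le e^{-\varphi_\rho}$. So for fixed $\rho'$ and every $\rho\le\rho'$,
\[
\int_X\abs{\xi_\rho}^2_\omega e^{-\varphi_{\rho'}}\,dV_\omega\le M .
\]
Fix a compact $K$ avoiding $Z_{\rho'}$. Weak convergence $\xi_\rho\rightharpoonup\xi$ in the $\tilde\varphi$-space, together with lower semicontinuity, gives $\int_K\abs{\xi}^2_\omega e^{-\varphi_{\rho'}}\,dV_\omega\le M$. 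Exhausting gives $\int_X\abs{\xi}^2_\omega e^{-\varphi_{\rho'}}\,dV_\omega\le M$ for every $\rho'$. Finally let $\rho'\to0$. Then $e^{-\varphi_{\rho'}}$ increases to $e^{-\varphi}$, and the monotone convergence theorem gives $\int_X\abs{\xi}^2_\omega e^{-\varphi}\,dV_\omega\le M$. Rewriting $e^{-\varphi}$ as above gives the lemma.

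The one point to check is that weak convergence with respect to $e^{-\tilde\varphi}$ controls the integrals against the smaller weights $e^{-\varphi_{\rho'}}$ on compact sets away from the singularities. This holds because $\varphi_{\rho'}$ is smooth there, so the two weights are comparable on such sets.
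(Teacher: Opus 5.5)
Your proposal is correct and is essentially the paper's argument: lower semicontinuity of the norm under the three successive weak limits, on compact sets where the weights are comparable to $e^{-\tilde{\varphi}}$, followed by monotone convergence as $\varphi_{\rho'}\downarrow\varphi$. The only difference is bookkeeping. The paper fixes $\rho_0$ at the outset and uses the smaller weight $e^{-\varphi_{\rho_0}}\le e^{-\varphi_\rho}$ through all three limits. You keep $e^{-\varphi_\rho}$ for the $j$ and $k$ limits and switch to $e^{-\varphi_{\rho'}}$ only before the $\rho\to0$ limit.
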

\begin{proof}
Fix a $\rho_0> 0$ which is smaller than $\rho^*$, and assume $\rho< \rho_0$ here so that $e^{-\varphi_{\rho_0}}\leq e^{-\varphi_{\rho}}$. \\
\indent Recall that we defined $\xi_{k,\,\rho}$ as a weak limit in $L^2_{(n,\,0)}(\tilde{\Omega}_{k,\,\rho}, F, \tilde{\varphi}, \omega)^{\oplus p}$ given by a subsequence of $\{\xoverline{\partial}^*_{g;\,k,\,\rho}w_{j,\,k,\,\rho}\rvert_{\tilde{\Omega}_{k,\,\rho}}\}_j$. But since both $\tilde{\varphi}$ and $\varphi_{\rho_0}$ are smooth on the closure of $\tilde{\Omega}_{k,\, \rho}$ and $\tilde{\Omega}_{k,\,\rho}$ is relatively compact in $\Omega_\rho$, we can also say $\xi_{k,\,\rho}$ is a weak limit in $L^2_{(n,\,0)}(\tilde{\Omega}_{k,\,\rho}, F, \varphi_{\rho_0}, \omega)^{\oplus p}$ given by the same subsequence of $\{\xoverline{\partial}^*_{g;\,k,\,\rho}w_{j,\,k,\,\rho}\rvert_{\tilde{\Omega}_{k,\,\rho}}\}_j$. This implies that
\[
\norm{\xi_{k,\,\rho}}^2_{\varphi_{\rho_0},\,\omega}= \int_{\tilde{\Omega}_{k,\,\rho}} \abs{\xi_{k,\,\rho}}^2_{\omega}e^{-\varphi_{\rho_0}}\,dV_\omega\leq M{.}
\]
Take any relatively compact domain $K$ in $\Omega_\rho= X\setminus Z_\rho$. For a similar reason, we can say that $\xi_\rho\rvert_K$ is a weak limit in $L^2_{(n,\,0)}(K, F, \varphi_{\rho_0}, \omega)^{\oplus p}$ given by a subsequence of $\{\xi_{k,\,\rho}\rvert_K\}_k$. Since $K\subset \tilde{\Omega}_{k,\,\rho}$ for every sufficiently large $k$,
\[
\norm{\xi_\rho\rvert_K}^2_{\varphi_{\rho_0},\,\omega}= \int_K \abs{\xi_\rho}^2_{\omega}e^{-\varphi_{\rho_0}}\, dV_\omega\leq M\,\,\,\text{and}\,\,\,\int_X \abs{\xi_\rho}^2_{\omega}e^{-\varphi_{\rho_0}}\, dV_\omega\leq M{.}
\]
The second inequality holds because $K\Subset\Omega_\rho$ is arbitrary. Now take any relatively compact domain $K'$ in $\Omega_{\rho_0}$. Again, we can say that $\xi\rvert_{K'}$ is a weak limit in $L^2_{(n,\,0)}(K', F, \varphi_{\rho_0}, \omega)^{\oplus p}$ given by a subsequence of $\{\xi_\rho\rvert_{K'}\}_\rho$. This implies
\[
\int_{K'} \abs{\xi}^2_{\omega}e^{-\varphi_{\rho_0}}\, dV_\omega\leq M\,\,\,\text{and}\,\,\,\int_X \abs{\xi}^2_{\omega}e^{-\varphi_{\rho_0}}\, dV_\omega\leq M{.}
\]
The second inequality holds because $K'\Subset \Omega_{\rho_0}$ is arbitrary. By the Lebesgue monotone convergence theorem, we can check that
\[
\int_X \frac{\abs{\xi}^2_{\omega}e^{-\psi}}{(\abs{g}^2e^{-\eta})^{\alpha q}}\, dV_\omega\leq M{.}
\]
\end{proof}
To extend the equality $g\cdot h=f$ to the whole $X$, we just apply the Riemann extension theorem to $h$.
\end{step}

\vspace{0.5cm}

\section{Proof of \Cref{MAIN THEOREM 2}}

\Cref{MAIN THEOREM 2} is a direct consequence of \Cref{MAIN THEOREM 1} obtained by setting appropriate line bundles and parameters. \\
\indent Assume first that $q_0= \min\{n, p-1\} > 0$. Let $q\geq q_0$ be an integer. We apply \Cref{MAIN THEOREM 1} by substituting $F$ with $(q+1)E+F$ and $E$ with $E$. We equip $E$ with the given singular hermitian metric $e^{-\eta}$, and $(q+1)E+F$ with the singular hermitian metric $e^{-\psi'}$, where $\psi'= \psi+ (q+1)\eta$. \\
\indent We choose the rational constant $\alpha= (q+1)/\,q_0> 1$. The curvature condition of \Cref{MAIN THEOREM 1} is verified as follows:
\vspace{0.2cm}
\[
\sqrt{-1}\partial\xoverline{\partial}\psi'= \sqrt{-1}\partial\xoverline{\partial}\psi+ (q+1)\sqrt{-1}\partial\xoverline{\partial}\eta\geq (q+1)\sqrt{-1}\partial\xoverline{\partial}\eta= \alpha q_0 \sqrt{-1}\partial\xoverline{\partial}\eta{,}
\]
where we used the assumptions $\sqrt{-1}\partial\xoverline{\partial}\psi \geq 0$ and $\sqrt{-1}\partial\xoverline{\partial}\eta \geq 0$. Since
\vspace{0.2cm}
\[
\int_X\frac{\abs{f}_\omega^2e^{-\psi'-\eta}}{(\abs{g}^2e^{-\eta})^{\alpha q_0+1}}\,dV_\omega = \int_X\frac{\abs{f}_\omega^2e^{-\psi-(q+2)\eta}}{(\abs{g}^2e^{-\eta})^{q+2}}\,dV_\omega <\infty{,}
\]
by \Cref{MAIN THEOREM 1}, there exist $p$ sections $h_1, \cdots, h_p \in H^0(X, K_X + (q+1)E + F)$ such that $\sum_k h_k g_k = f$ and 
\vspace{0.2cm}
\[
\int_X\frac{\abs{h}_\omega^2e^{-\psi'}}{(\abs{g}^2e^{-\eta})^{\alpha q_0}}\,dV_\omega \leq \Bigl(2+\frac{4(\alpha+1)}{(\alpha-1)^2}\Bigr) \int_X\frac{\abs{f}_\omega^2e^{-\psi'-\eta}}{(\abs{g}^2e^{-\eta})^{\alpha q_0+1}}\,dV_\omega{.}
\]
Since $q\geq q_0$, we can check that
\[
2+ \frac{4(\alpha+ 1)}{(\alpha- 1)^2}= 2+ \frac{4(q+ q_0+ 1)q_0}{(q- q_0+ 1)^2}\leq 8q^2+ 4q+ 2{.}
\]
This precisely gives the desired estimate:
\vspace{0.2cm}
\[
\int_X\frac{\abs{h}_\omega^2e^{-\psi-(q+1)\eta}}{(\abs{g}^2e^{-\eta})^{q+1}}\,dV_\omega \leq (8q^2+4q+2)\int_X \frac{\abs{f}_\omega^2e^{-\psi -(q+2)\eta}}{(\abs{g}^2e^{-\eta})^{q+2}}\,dV_\omega{.}
\]
\indent If $q_0= 0$, then $p= 1$. Apply the $q= 0$ case of \Cref{MAIN THEOREM 1} by substituting $F$ with $(q+ 1)E+ F$ endowed with the metric whose weight is $\psi+ (q+ 1)\eta$. Then $h_1= f/\, g_1$ extends holomorphically and
\[
\int_X \abs{h_1}^2_\omega e^{-\psi- (q+ 1)\eta}\, dV_\omega= \int_X \frac{\abs{f}^2_\omega e^{-\psi- (q+ 2)\eta}}{\abs{g_1}^2 e^{-\eta}}\, dV_\omega{.}
\]

\vspace{0.5cm}

\section{Proof of \Cref{Corollary 1} and \Cref{Corollary 2}}
\subsection{PROOF OF \Cref{Corollary 1}}
The proof of \Cref{Corollary 1} follows the one in the version in \cite{Var08} with minor modifications. \\
\indent Assume that $G= (G_1, \cdots, G_p)$ is not identically zero. After relabeling, we may assume $G_1\not\equiv 0$. By taking $G_{p+1}=\cdots=G_{n+1}=0$, we may assume $q=n$. Take $\alpha=(n+k)/n$ so that $\alpha q=n+k$. We want to apply our main theorem with $F=(n+k)L+H$, $E=L$, and $g_i=G_i$. Fix a metric $e^{-\eta}$ for $L$ having non-negative curvature current, for example one can take $\eta=\log\,\abs{G_1}^2$, and let $\psi=\varphi+ (n+k)\eta$. Then
\vspace{0.2cm}
\[
\sqrt{-1}\partial\xoverline{\partial}\psi-\alpha q\cdot \sqrt{-1}\partial\xoverline{\partial}\eta=\sqrt{-1}\partial\xoverline{\partial}\varphi\geq0{.}
\]
Suppose that $f\in H^0(X,\mathcal{O}_X((n+k+1)L+H+K_X)\otimes\mathcal{J}_{k+1})$. By our main theorem, there are $h_1,\cdots,h_p\in H^0(X,\mathcal{O}_X((n+k)L+H+K_X))$ such that $\sum_l h_lG_l=f$. Moreover,
\vspace{0.2cm}
\begin{flalign*}
&\int_X\frac{\abs{h}_\omega^2e^{-\varphi}}{\abs{g}^{2(n+k)}}\,dV_\omega=\int_X\frac{\abs{h}_\omega^2e^{-\psi}}{(\abs{g}^2e^{-\eta})^{n\alpha}}\,dV_\omega \leq \Bigl(2+\frac{4(\alpha+1)}{(\alpha-1)^2}\Bigr)\int_X\frac{\abs{f}_\omega^2 e^{-\psi-\eta}}{(\abs{g}^2e^{-\eta})^{n\alpha+1}}\,dV_\omega \\
&=\frac{2(k^2 + 2nk + 4n^2)}{k^2}\int_X\frac{\abs{f}_\omega^2e^{-\varphi}}{\abs{g}^{2(n+k+1)}}\,dV_\omega<\infty{.}
\end{flalign*}
Thus $h_l\in\mathcal{J}_k$ locally.


\vspace{0.5cm}

\subsection{PROOF OF \Cref{Corollary 2}}
The proof is almost identical to the version in \cite{Siu05}, but we include the proof for the reader's convenience. \\
\indent For $0\leq l<a$, let $H_l= (b+l)F -K_X$ and $L= aF$. Let $G_1,\cdots,G_p$ be a basis of $H^0(X, L)$. Let $I_1,\cdots,I_q$ be a basis of $H^0(X, bF- K_X)$. We give $H_l$ the metric
\vspace{0.2cm}
\[
e^{-\varphi_l}= \frac{1}{(\sum^p_{j=1}\abs{G_j}^{2l/\,a})(\sum^q_{j=1}\abs{I_j}^2)}{.}
\]
Since both multiplier ideal sheaves
\vspace{0.2cm}
\[
\mathcal{J}_{k+1}=\mathcal{J}(e^{-\varphi_l}\abs{G}^{-2(n+k+1)})\,\,\,\text{and}\,\,\,\mathcal{J}_{k}=\mathcal{J}(e^{-\varphi_l}\abs{G}^{-2(n+k)})
\]
are just $\mathcal{O}_X$ due to the global generation of $aF$ and $bF-K_X$, it follows from \Cref{Corollary 1} that
\vspace{0.2cm}
\[
H^0(X, (n+k+1)L+ H_l+K_X)= \sum^p_{j=1} G_j\cdot H^0(X, (n+k)L+H_l+K_X)
\]
for $k\geq 1$ and $0\leq l< a$, which means that 
\vspace{0.2cm}
\[
H^0(X, ((n+k+1)a+l+b)F)= \sum^p_{j=1} G_j\cdot H^0(X, ((n+k)a+l+b)F)
\]
for $k\geq 1$ and $0\leq l< a$. Thus $\bigoplus^{(n+2)a+b-1}_{m=0} H^0(X, mF)$ generates the ring 
\vspace{0.2cm}
\[
\bigoplus^\infty_{m=0} H^0(X, mF){.}
\]

\bibliographystyle{amsalpha}
\bibliography{ref}

\end{document}